\theoremstyle{plain}
\newtheorem{theorem}{Theorem}[section]
\newtheorem{lemma}[theorem]{Lemma}
\newtheorem{proposition}[theorem]{Proposition}
\newtheorem{remark}[theorem]{Remark}
\newtheorem{graph}{Graph}
\theoremstyle{definition}
\newtheorem{definition}[theorem]{Definition}
\newtheorem{assumption}[theorem]{Assumption}
\theoremstyle{remark}
\newcommand{\adots}{\mathinner{\mkern2mu\raisebox{0.1em}{.}\mkern2mu\raisebox{0.4em}{.}\mkern2mu\raisebox{0.7em}{.}\mkern1mu}} 
\newcommand{\parallelsum}{\mathbin{\!/\mkern-5mu/\!}} 
\begin{document}

\begin{frontmatter}

\title{Intrinsic Tetrahedron Formation of Reduced Attitude\thanksref{footnoteinfo}}

\thanks[footnoteinfo]{This paper was not presented at any IFAC
meeting.
}

\author[kth,hit]{Silun Zhang}\ead{silunz@kth.se},
\author[kth]{Wenjun Song}\ead{jsongwen@163.com},
\author[hit]{Fenghua He}\ead{hefenghua@hit.edu.cn},
\author[amss]{Yiguang Hong}\ead{yghong@iss.ac.cn},
\author[kth]{Xiaoming Hu}\ead{hu@kth.se}

\address[kth]{Optimization and Systems Theory, Department of Mathematics, KTH Royal Institute of Technology, 100 44 Stockholm, Sweden}
\address[hit]{Control and Simulation Center, Harbin Institute of Technology, 150001 Harbin, China}
\address[amss]{Key Laboratory of Systems and Control, Academy of Mathematics and Systems Science, Chinese Academy of Sciences, 100190 Beijing, China}

\begin{keyword}
Attitude control; distributed control; formation control; nonlinear systems.
\end{keyword}

\begin{abstract}                          
In this paper, formation control for reduced attitude is studied, in which both stationary and rotating regular tetrahedron formation can be achieved and are asymptotically stable under a large family of gain functions in the control. Moreover, by further restriction on the control gain, almost global stability of the stationary formation is obtained. In addition, the control proposed is an intrinsic protocol that only uses relative information and does not need to contain any information of the desired formation beforehand. The constructed formation pattern is totally attributed to the geometric properties of the space and the designed inter-agent connection topology. Besides, a novel coordinates transformation is proposed to represent the relative reduced attitudes in $\mathcal{S}^2$, which is shown to be an efficient approach to reduced attitude formation problems.
\end{abstract}

\end{frontmatter}


\section{Introduction}
Formation control problem has attracted considerable attention in the last decades. This trend is not only inspired by a significant amount of similar phenomenon presented in bio-communities, but also motivated by the theoretical challenges posed and its practical potential in various applications, such as formation flying \cite{Ren03ACC,Scharf04ACC}, target encircling \cite{WRen_circlement}, terrestrial or oceanographic exploration \cite{Hu01TAC}, and collaborative surveillance \cite{Tron09CDC,Wang13PRL}. Among such problems studied, attitude formation is an important one.


Rigid-body attitude control has been widely studied for a long history \cite{Morin95,Wen91}, and utilized in many engineering applications, such as the control of atmospheric aircrafts, earth-synchronous satellites, and robotic manipulators. In general, the attitude of an unconstrained rigid-body has three degrees of freedom, which can be represented globally and uniquely by a rotation matrix evolving in Lie group $\mathcal{SO}(3)$. However, in some scenarios, not all three degrees of freedom are relevant to the problem. For example, in the control of  field-of-view for cameras, orientation of the solar panel or antenna for satellites, and thrust vector for quad-rotor aircrafts, we only concern the pointing direction of a body-fixed axis, and any rotations about this axis are then indifferent. Motivated by these applications, reduced attitude control problems arise (see \cite{bullo1995control,chaturvedi2011rigid} and references therein), in which reduced attitudes evolves in  the 2-sphere $\mathcal{S}^2$.

As the configuration space of (reduced) attitudes is a compact manifold, some topological obstructions emerge in attitude control. In order to refrain from working directly on such a manifold, various attitude parameterizations are applied in attitude control studies, such as  Euler angles \cite{ren2007distributed,shuster1993survey}, quaternions \cite{lawton2002synchronized,ren2006distributed}, Rodrigues parameters \cite{zou2012attitude}. However, unfortunately, no parameterizations \cite{shuster1993survey} can represent the attitude space both globally and uniquely.
%
%
In addition to this representation barrier, as both product manifolds  $\mathcal{SO}(3)^n$ and $(\mathcal{S}^2)^n$ are compact smooth manifolds without boundary \cite{Bhat00scl}, any time-invariant Lipschitz continuous feedback control inherently yield multiple disjoint equilibria manifolds. It leads to that none of them can achieve globally asymptotical stability. Thus, almost global stabilization becomes the best possible result for attitude control problems.

Following many significant results on reduced attitude control for single rigid-body \cite{bullo1995control,chaturvedi2011rigid}, recently many results have been obtained on cooperative attitude synchronization of multiple rigid bodies. As studies for single rigid-body, most of coordination control for multiple attitudes are based on some parameterization \cite{lawton2002synchronized,paley2009stabilization,ren2010distributed,zou2016distributed}. In order to avoid topological singularities in some certain points, \cite{sarlette2009autonomous} studies consensus control directly in attitude space, however the control proposed only provides local stability of the consensus manifold. Then \cite{tron2012intrinsic} solves this problem, and achieves consensus with almost global convergence. Compared to consensus, formation control is more general and also more involved. \cite{WJ15ac} investigates formation control for reduced attitudes with ring inter-agent graph, and shows that for different parity of the number of agents, antipodal and cyclic formation are almost globally stable respectively.

As a three-dimensional configuration, tetrahedron formations have attracted a considerable interest in numerous applications and many ongoing projects \cite{curtis1999magnetospheric,escoubet1997cluster,hughes2008formation}. One of the reasons is that a tetrahedron formation consisting of four sensors is the minimum needed to resolve a physical field gradients in space \cite{Daly1998Analysis}, and especially a regular tetrahedron provides the maximal observational effectiveness \cite{roscoe2013satellite}. In the Magnetospheric Multiscale (MMS) mission launched by NASA \cite{curtis1999magnetospheric,hughes2008formation}, in order to build a three-dimensional model of the electric and magnetic fields of the Earth's magnetosphere, four satellites need to construct a regular tetrahedron formation near the apogee of a reference orbit. Another example using the tetrahedron configuration is the Cluster mission developed by  European Space Agency \cite{escoubet1997cluster}.

In this paper, a continuous control law is proposed for reduced attitudes problems, by which both stationary and rotating regular tetrahedron formations can achieve asymptotical stability under a quite large family of gain functions in the control. With a further restriction on the control gain, almost global stability of the stationary formation is also obtained. To this end, we introduce a novel coordinates transformation that represents the relative reduced attitudes between the agents. Although involved, it is shown to be an efficient approach to reduced attitude formation problems. Moreover, in contrast to most existing methodologies on formation control \cite{lawton2002synchronized,Mou15IFAC,ren2010distributed}, the proposed method does not need to have the formation errors in the protocol. Whereas, the desired formation is constructed based on the geometric properties of the manifold $\mathcal{S}^2$ and the designed connection topology. We referred to this type of formation control as intrinsic formation control. Another virtue of the control proposed is that only relative attitude measurement is required. Although only reduced attitudes are considered in the paper, similar design techniques can also be applied to the formation control of point mass systems\cite{Zhang16CCC}.

The rest of the paper is organized as follows: in Section~\ref{sec:pre}, the necessary preliminaries and notions are introduced.  Section~\ref{sec:pformu} proposes the intrinsic formation problem addressed in the paper. In Section~\ref{sec:SF}, the stationary regular tetrahedron formation is rendered almost globally asymptotically stable. Following that, an illustrative example is provided in Section~\ref{sec:simu}, and the conclusions are given in Section~\ref{sec:conclusion}.

\section{Notation and Preliminary}\label{sec:pre}
In this paper, we model the inter-agent connectivity as a graph $\mathcal{G}=(\mathcal{V},\mathcal{E})$, where $\mathcal{V}=\{1,\ldots,n\}$  is the set of nodes, and $\mathcal{E}\subset \mathcal{V} \times \mathcal{V}$ is the edge set. A graph $\mathcal{G}$ is referred to as an undirected graph if $(j,i) \in \mathcal{E}$, for every $(i,j) \in \mathcal{E}$, otherwise $\mathcal{G}$ is directed. We also define the neighbor set of node $i$ as $\mathcal{N}_i= \{ j:(j,i) \in \mathcal{E}\}$, and we say $j$ is a neighbor of $i$, if $j \in \mathcal{N}_i$.
\subsection{Attitude and reduced attitude}

 In the three-dimensional Euclidean space, the (full) attitude of a rigid body $i$ can be determined by a length-preserving linear transformation between coordinates frame $\mathcal{F}_i$ and $\mathcal{F}_w$, where $\mathcal{F}_i$ is the instantaneous body frame of agent $i$ and $\mathcal{F}_w$ presents an inertial reference frame. This transformation is specified by a \textit{rotation matrix} $R_i\in \mathbb{R}^{3\times3}$, and its columns are, respectively, coordinates of three orthogonal unit basis of frame $\mathcal{F}_i$ resolved to frame $\mathcal{F}_w$. Furthermore, all rotation matrices together form the special orthogonal group $\mathcal{SO}(3)=\{R\in\mathbb{R}^{3\times3}: R^TR=I, det(R)=1\}$,  under the operation of matrix multiplication.


In reduced attitude applications, we only consider the pointing direction of a body-fixed axis, and ignore the rotations about this axis. Let $\boldsymbol{b}_i\in \mathcal{S}^2$ denote the coordinates of $i$'s pointing axis relative to the body frame $\mathcal{F}_i$, where $\mathcal{S}^2=\{x\in\mathbb{R}^{3}: \|x\|=1\}$ is the $2$-sphere. Then $i$'s pointing axis coordinates resolved in frame $\mathcal{F}_w$ is $\Gamma_i=R_i\boldsymbol{b}_i$. Since $R_i \in \mathcal{SO}(3)$, we still have $\Gamma_i \in \mathcal{S}^2$. This vector $\Gamma_i$ is referred to as the \textit{reduced attitude} of rigid body $i$, on account of the neglect of the rotations about one axis in pointing applications.

 The kinematics of the reduced attitude $\Gamma_i$ is governed by \cite{WJ15ac}
 \begin{equation}\label{eq:pre:DGamma}
        \dot{\Gamma}_i = \widehat{\omega}_i \Gamma_i ,
\end{equation}
where $\omega_i \in \mathbb{R}^{3}$ is $i$'s angular velocity relative to the inertial frame $\mathcal{F}_w$, and hat operator $\widehat{(\cdot)}$ satisfies, for any $x=[x_1,x_2,x_3]^T\in \mathbb{R}^{3}$,
\begin{equation}\label{eq:pre:hat}
 \widehat{x}:=
 \begin{pmatrix}
   0 &-x_3 &x_2\\
   x_3& 0 &-x_1\\
   -x_2& x_1 & 0
 \end{pmatrix} .
\end{equation}
%

For any two points $\Gamma_i, \Gamma_j \in \mathcal{S}^2$, we define $\theta_{ij} \in [0, \pi]$ and $k_{ij} \in \mathcal{S}^2$ as
$$ \theta_{ij}=\arccos(\Gamma_i^T \Gamma_j), \;\; k_{ij}=\frac{\widehat{\Gamma_i} \Gamma_j}{\sin(\theta_{ij})}.$$
In the definition of $k_{ij}$, we stipulate $k_{ij}$ to be any unit vector orthogonal to $\Gamma_i$, when $\theta_{ij}=0$ or $\pi$.

%

Note that $\theta_{ij}$ is also the geodesic distance between $\Gamma_i$ and $\Gamma_j$ in $\mathcal{S}^2$, and $\Gamma_j= exp(\theta_{ij} \widehat{k}_{ij})\Gamma_i$. The relationship among three reduced attitudes follows from the spherical cosine formula \cite{Todhunter_Identity}:

\begin{lemma}\label{Thm:PR:cosine}
  For any three reduced attitudes $\Gamma_i, \Gamma_j$, $\Gamma_k \in \mathcal{S}^2$, the following relationship will always hold:
$$\cos(\theta_{ij})=\cos(\theta_{ik})\cos(\theta_{jk})+\sin(\theta_{ik})\sin(\theta_{jk})k_{ik}^Tk_{jk}.$$
\end{lemma}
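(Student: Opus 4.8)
The plan is to reduce the claimed identity to a purely algebraic statement about inner and cross products in $\mathbb{R}^3$ and then invoke a standard vector identity. Recalling the definitions, $\cos(\theta_{ab})=\Gamma_a^T\Gamma_b$ for any pair of indices, and $k_{ab}=\widehat{\Gamma_a}\Gamma_b/\sin(\theta_{ab})$ is just the normalized cross product $\Gamma_a\times\Gamma_b$. Hence the only nontrivial term to analyze is $k_{ik}^Tk_{jk}$, and the whole proof hinges on evaluating it.

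First I would treat the nondegenerate case $\theta_{ik},\theta_{jk}\in(0,\pi)$, so that both $\sin(\theta_{ik})$ and $\sin(\theta_{jk})$ are strictly positive and $k_{ik},k_{jk}$ are genuinely determined by the cross products. Clearing the sine factors, the product of interest becomes
$$\sin(\theta_{ik})\sin(\theta_{jk})\,k_{ik}^Tk_{jk}=(\widehat{\Gamma_i}\Gamma_k)^T(\widehat{\Gamma_j}\Gamma_k)=(\Gamma_i\times\Gamma_k)\cdot(\Gamma_j\times\Gamma_k).$$
Next I would apply the Lagrange (Binet--Cauchy) identity $(\mathbf{a}\times\mathbf{b})\cdot(\mathbf{c}\times\mathbf{d})=(\mathbf{a}^T\mathbf{c})(\mathbf{b}^T\mathbf{d})-(\mathbf{a}^T\mathbf{d})(\mathbf{b}^T\mathbf{c})$ with $\mathbf{a}=\Gamma_i$, $\mathbf{b}=\mathbf{d}=\Gamma_k$, $\mathbf{c}=\Gamma_j$. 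Using $\Gamma_k^T\Gamma_k=1$, this gives
$$(\Gamma_i\times\Gamma_k)\cdot(\Gamma_j\times\Gamma_k)=\Gamma_i^T\Gamma_j-(\Gamma_i^T\Gamma_k)(\Gamma_j^T\Gamma_k)=\cos(\theta_{ij})-\cos(\theta_{ik})\cos(\theta_{jk}).$$
Substituting back and rearranging yields exactly the asserted formula.

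Finally I would dispose of the degenerate cases $\theta_{ik}\in\{0,\pi\}$ or $\theta_{jk}\in\{0,\pi\}$, which is where the only real care is needed, since there $k_{ik}$ or $k_{jk}$ is stipulated to be an \emph{arbitrary} orthogonal unit vector. In such a case the corresponding sine factor vanishes, so the term $\sin(\theta_{ik})\sin(\theta_{jk})\,k_{ik}^Tk_{jk}$ is zero regardless of the (bounded) choice of $k$, and it suffices to verify the reduced relation $\cos(\theta_{ij})=\cos(\theta_{ik})\cos(\theta_{jk})$. If $\theta_{ik}=0$ then $\Gamma_i=\Gamma_k$, whence $\theta_{ij}=\theta_{jk}$ and $\cos(\theta_{ik})=1$; if $\theta_{ik}=\pi$ then $\Gamma_i=-\Gamma_k$, whence $\cos(\theta_{ij})=-\cos(\theta_{jk})$ and $\cos(\theta_{ik})=-1$; in either situation the reduced identity holds, and the symmetric argument covers $\theta_{jk}\in\{0,\pi\}$. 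Thus the main obstacle is not the computation --- which is a one-line application of Lagrange's identity --- but confirming that the convention chosen for $k$ at the coincident and antipodal configurations is consistent with the formula, and the vanishing sine factor is precisely what guarantees this.
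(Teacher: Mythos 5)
Your proof is correct, and it is worth noting that the paper itself offers no proof at all for this lemma: it simply cites the classical spherical law of cosines (Todhunter), in which $k_{ik}^Tk_{jk}$ plays the role of the cosine of the spherical angle at the vertex $\Gamma_k$. Your self-contained derivation via the Binet--Cauchy (Lagrange) identity
$(\mathbf{a}\times\mathbf{b})\cdot(\mathbf{c}\times\mathbf{d})=(\mathbf{a}^T\mathbf{c})(\mathbf{b}^T\mathbf{d})-(\mathbf{a}^T\mathbf{d})(\mathbf{b}^T\mathbf{c})$, with $\mathbf{b}=\mathbf{d}=\Gamma_k$ and $\Gamma_k^T\Gamma_k=1$, is the standard algebraic route and checks out line by line. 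What your argument buys over the bare citation is precisely the degenerate-case analysis: the classical spherical law of cosines is stated for genuine spherical triangles, whereas the paper's convention makes $k_{ij}$ an \emph{arbitrary} unit vector orthogonal to $\Gamma_i$ when $\theta_{ij}\in\{0,\pi\}$, so the identity as stated requires the verification you supply --- that the vanishing sine factor annihilates the arbitrary choice, and that the reduced relation $\cos(\theta_{ij})=\cos(\theta_{ik})\cos(\theta_{jk})$ holds at coincident and antipodal configurations (it does, by the direct substitutions $\Gamma_i=\pm\Gamma_k$ and symmetrically $\Gamma_j=\pm\Gamma_k$). This matters for the paper, since the lemma is later applied along trajectories where pairs of attitudes may pass through alignment. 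In short: correct, complete, and slightly more careful than the source it replaces.
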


In this paper, we will also use a frequently mentioned parametrization of $\Gamma_i$ based on RPY angles system \cite{Sciavicco_Book},
\begin{gather}\label{eq:pre:RPY}
  \Gamma_i=
    \begin{pmatrix}
      cos(\psi_i)cos(\phi_i)\\
      sin(\psi_i)cos(\phi_i)\\
      sin(\phi_i)
    \end{pmatrix}.
\end{gather}
where $\psi_i \in [-\pi,\pi)$, $\phi_i \in [-\pi/2,\pi/2]$.

\subsection{Several Results on Matrices}
In this part, we list several results used in the paper on matrices.

\begin{definition}
  $A,B \in \mathbb{R}^{n \times n}$, we say that $A$ and $B$ can be \emph{simultaneously diagonalized} if there exists a nonsingular matrix $P$ such that
   $$P^{-1}AP=D_1,\; P^{-1}BP=D_2,$$
   where $D_1$ and $D_2$ are both diagonal matrices.
\end{definition}

The following theorem gives the sufficient and necessary condition regarding simultaneous diagonalization.
\begin{lemma}  \cite{horn2012matrix} \label{Thm:pre:simuDiagonalize} 
  If $A,B \in \mathbb{R}^{n \times n}$ and assume that $A$, $B$ are both diagonalizable, then $A$ and $B$ are simultaneously diagonalizable if and only if $A$, $B$ are commutative i.e. $AB=BA$.
\end{lemma}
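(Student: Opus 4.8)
The plan is to prove the two implications separately, with the forward direction being almost immediate and the reverse direction carrying the real content.

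For necessity ($\Rightarrow$), I would suppose a nonsingular $P$ exists with $P^{-1}AP = D_1$ and $P^{-1}BP = D_2$, both diagonal. Since any two diagonal matrices commute, $D_1 D_2 = D_2 D_1$. Writing $A = P D_1 P^{-1}$ and $B = P D_2 P^{-1}$ and substituting then gives $AB = P D_1 D_2 P^{-1} = P D_2 D_1 P^{-1} = BA$, so commutativity is forced. This half uses only that diagonal matrices commute.

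For sufficiency ($\Leftarrow$), assume $AB = BA$ with both matrices diagonalizable. The strategy is to use the eigenspace decomposition induced by $A$ as a family of $B$-invariant blocks and to diagonalize $B$ separately within each block. Since $A$ is diagonalizable, the space splits as a direct sum of the eigenspaces $E_\lambda = \ker(A - \lambda I)$ taken over the distinct eigenvalues $\lambda$ of $A$. The key step is to show each $E_\lambda$ is $B$-invariant: for $v \in E_\lambda$, commutativity gives $A(Bv) = B(Av) = \lambda(Bv)$, so $Bv \in E_\lambda$, and hence $B$ restricts to an operator $B|_{E_\lambda}$ on each eigenspace. I would then argue that each restriction $B|_{E_\lambda}$ is itself diagonalizable, choose within each $E_\lambda$ a basis of eigenvectors of $B|_{E_\lambda}$ (these are automatically eigenvectors of $A$, since they lie in $E_\lambda$), and concatenate these bases over all $\lambda$ to obtain a basis of common eigenvectors. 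The matrix $P$ formed from these columns then diagonalizes $A$ and $B$ simultaneously.

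I expect the main obstacle to be precisely the claim that the restriction of a diagonalizable operator to an invariant subspace remains diagonalizable; everything else is bookkeeping. The cleanest route is the minimal-polynomial characterization: an operator is diagonalizable if and only if its minimal polynomial is a product of distinct linear factors, and since the minimal polynomial of $B|_{E_\lambda}$ divides that of $B$, it too is a product of distinct linear factors, so $B|_{E_\lambda}$ is diagonalizable. Some care is also needed to ensure one works over a field in which both matrices are genuinely diagonalizable (here $\mathbb{R}$, so all eigenvalues are real and the constructed $P$, $D_1$, $D_2$ are real), which is exactly what the hypothesis that $A$ and $B$ are both diagonalizable supplies.
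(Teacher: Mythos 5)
Your proof is correct. Note, however, that the paper itself offers no proof of this lemma at all: it is imported verbatim from Horn and Johnson \cite{horn2012matrix}, so there is no in-paper argument to compare against. Your argument --- trivial necessity from the fact that diagonal matrices commute, and sufficiency via the $B$-invariance of each eigenspace $E_\lambda$ of $A$ together with the minimal-polynomial divisibility argument showing each restriction $B|_{E_\lambda}$ stays diagonalizable --- is exactly the standard textbook proof found in the cited reference, and your remark that the hypotheses guarantee real eigenvalues, so that $P$, $D_1$, $D_2$ can be taken real, correctly addresses the one field-of-scalars subtlety the real setting introduces.
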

%

Next, we revisit a special class of matrices, \emph{persymmetric matrix}. A matrix is referred to persymmetric matrix when it is symmetric about its counterdiagonal. A common persymmetric matrix is exchange matrix, denoted by $J$, in which the $1$ elements reside on its counterdiagonal and all other elements are $0$. We can verify that
$$J^TJ=I,\;\;\; J=\begin{pmatrix}
      & \, & \;1 \\
      & \adots &\,    \\
    1\; & \,  &\,
  \end{pmatrix} ,$$
where $I$ is the identity matrix in a suitable size.

A real matrix $A$ is persymmetric, if and only if $A=JA^TJ$. So we have the following lemma.
\begin{lemma}\label{Thm:PR:AJCommutative}
  Let $A \in \mathbb{R}^{n \times n}$ be a real matrix. If $A$ is both symmetric and persymmetric, then $A$ and $J$ are commutative, i.e. $AJ=JA$.
\end{lemma}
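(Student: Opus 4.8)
The plan is to derive the commutation relation $AJ = JA$ by combining the two structural hypotheses on $A$ with the fact that the exchange matrix $J$ is an involution; no spectral machinery is needed. First I would record the characterization stated just above the lemma: $A$ is persymmetric precisely when $A = JA^TJ$. Since $A$ is also assumed to be symmetric, I may substitute $A^T = A$ into this identity, which immediately yields
\begin{equation*}
  A = JAJ.
\end{equation*}

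Next I would establish that $J$ is its own inverse, i.e. $J^2 = I$. Indeed, $J$ is symmetric by construction, since its nonzero entries lie on the counterdiagonal and hence $J = J^T$; combining this with the stated relation $J^TJ = I$ gives $J^2 = J^TJ = I$.

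Finally, I would right-multiply the relation $A = JAJ$ by $J$ and invoke $J^2 = I$ to obtain $AJ = JAJ^2 = JA$, which is exactly the desired conclusion.

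As for the difficulty, there is essentially no obstacle here: the whole argument is a two-line manipulation. The only point requiring any care is to apply the hypotheses in the right order --- symmetry to collapse $JA^TJ$ into $JAJ$, and the involution property of $J$ to clear the trailing factor --- rather than to look for a deeper eigenstructure argument. In particular, no appeal to simultaneous diagonalizability (Lemma~\ref{Thm:pre:simuDiagonalize}) is required for this elementary step, although that lemma will presumably be the tool that consumes the present commutation result later in the paper.
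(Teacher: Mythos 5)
Your proof is correct and follows exactly the route the paper intends: the paper gives no separate proof but derives the lemma directly from the characterization $A = JA^TJ$ of persymmetry, and your argument --- substituting $A^T = A$ to get $A = JAJ$, then clearing the trailing $J$ via the involution $J^2 = J^TJ = I$ --- is precisely that computation. You are also right that no spectral machinery or simultaneous diagonalization is needed here; those tools are only consumed later, in the proof of Lemma~\ref{Thm:PR:lambdaMatrix}.
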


The next theorem provides the spectrum of a both symmetric and persymmetric matrix in a special structure.
\begin{lemma}\label{Thm:PR:lambdaMatrix}
   Let $\Lambda_n=\{a_{ij}\}\in \mathbb{R}^{n\times n}$. If its element $a_{ij}$ satisfies
    $$a_{ij}=\begin{cases}
      0, & if\; j=n-i+1\\
      1, & \;otherwise
    \end{cases},
    $$
    then the eigenvalues of $\Lambda_n$ is
    $$\begin{cases}\lambda_1(\Lambda_n)=n-1;\\
      \lambda_i(\Lambda_n)=(-1)^{i};\; i \in \{2,\cdots,n\}.
    \end{cases} $$
\end{lemma}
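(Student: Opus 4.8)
The plan is to recognize the combinatorial structure of $\Lambda_n$ and reduce its spectrum to that of the exchange matrix $J$. First I would observe that, writing $\mathbf{1}$ for the all-ones column vector, $\Lambda_n = \mathbf{1}\mathbf{1}^T - J$: the all-ones matrix $\mathbf{1}\mathbf{1}^T$ places a $1$ in every entry, and subtracting $J$ deletes exactly the $1$'s on the counterdiagonal, which is precisely the pattern defining $a_{ij}$. This matrix is manifestly symmetric, and since both $\mathbf{1}\mathbf{1}^T$ and $J$ are persymmetric, so is $\Lambda_n$. Hence by Lemma~\ref{Thm:PR:AJCommutative} it commutes with $J$, and since $\Lambda_n$ and $J$ are both diagonalizable, Lemma~\ref{Thm:pre:simuDiagonalize} guarantees a common eigenbasis. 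It therefore suffices to diagonalize $\Lambda_n$ on each eigenspace of $J$.

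Second, I would use the fact that $J$ (with $J^2 = I$) has only the eigenvalues $\pm 1$, whose eigenspaces are the symmetric vectors ($v_i = v_{n-i+1}$, of dimension $\lceil n/2 \rceil$) and the antisymmetric vectors ($v_i = -v_{n-i+1}$, of dimension $\lfloor n/2 \rfloor$). On the antisymmetric subspace every vector satisfies $\mathbf{1}^T v = 0$, so $\mathbf{1}\mathbf{1}^T v = 0$ and $\Lambda_n v = -Jv = v$; this yields the eigenvalue $1$ with multiplicity $\lfloor n/2 \rfloor$. On the symmetric subspace $Jv = v$ gives $\Lambda_n v = (\mathbf{1}^T v)\mathbf{1} - v$. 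The distinguished vector $\mathbf{1}$ is itself symmetric and gives $\Lambda_n \mathbf{1} = n\mathbf{1} - \mathbf{1} = (n-1)\mathbf{1}$, the eigenvalue $\lambda_1 = n-1$. Every symmetric vector orthogonal to $\mathbf{1}$ (there is a space of dimension $\lceil n/2 \rceil - 1$ of these) is annihilated by $\mathbf{1}\mathbf{1}^T$ and hence satisfies $\Lambda_n v = -v$, giving the eigenvalue $-1$ with multiplicity $\lceil n/2 \rceil - 1$.

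Finally, I would assemble the multiset of eigenvalues: $n-1$ once, $+1$ with multiplicity $\lfloor n/2 \rfloor$, and $-1$ with multiplicity $\lceil n/2 \rceil - 1$ (which sum to $n$, as required). The only remaining bookkeeping, and the step most prone to an off-by-one slip, is to verify that the compact formula $\lambda_i = (-1)^i$ for $i = 2,\ldots,n$ reproduces exactly these two multiplicities: the even indices in $\{2,\ldots,n\}$ (there are $\lfloor n/2 \rfloor$ of them) give $+1$, while the odd indices (there are $\lceil n/2 \rceil - 1$ of them) give $-1$, and this matches for both parities of $n$. The spectral computation itself is routine once the decomposition $\Lambda_n = \mathbf{1}\mathbf{1}^T - J$ and the $J$-eigenspace splitting are in place; the main conceptual point is simply noticing that $\Lambda_n$ is a rank-one perturbation of $-J$ and that this perturbation lives entirely in the symmetric ($J=+1$) subspace.
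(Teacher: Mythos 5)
Your proof is correct, and it rests on the same decomposition as the paper's own argument: $\Lambda_n = E_n - J$ with $E_n = \mathbf{1}\mathbf{1}^T$, plus the observation via Lemmas~\ref{Thm:PR:AJCommutative} and~\ref{Thm:pre:simuDiagonalize} that $\Lambda_n$ and $J$ admit a common eigenbasis. Where you genuinely diverge is in how the two spectra are combined. The paper argues abstractly that simultaneous diagonalization forces $\lambda_i(\Lambda_n)=\lambda_{p(i)}(E_n)+\lambda_{q(i)}(J)$ for \emph{unspecified} permutations $p,q$ (note the displayed sign there is a slip: since $\Lambda_n=E_n-J$ the combination should be a difference, which matters for odd $n$ where the multiplicities of $\pm1$ in $\sigma(J)$ differ), computes $\sigma(E_n)=\{n,0,\dots,0\}$ and $\sigma(J)=\{\pm1\}$ by determinant row operations, and then pins down the pairing only by noting that $\det(\lambda I-\Lambda_n)$ contains the factor $(\lambda-n+1)$; the multiplicities of $+1$ and $-1$ in $\sigma(\Lambda_n)$ are left implicit. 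Your explicit splitting of $\mathbb{R}^n$ into the $J$-eigenspaces does this bookkeeping exactly: antisymmetric vectors are killed by $\mathbf{1}^T$ and give $+1$ with multiplicity $\lfloor n/2\rfloor$, the vector $\mathbf{1}$ gives $n-1$, and symmetric vectors orthogonal to $\mathbf{1}$ give $-1$ with multiplicity $\lceil n/2\rceil-1$, which sums to $n$ and matches the formula $(-1)^i$, $i=2,\dots,n$, for both parities --- a verification the paper omits entirely. In fact your invariant-subspace computation makes the appeal to Lemmas~\ref{Thm:PR:AJCommutative} and~\ref{Thm:pre:simuDiagonalize} logically redundant (the three subspaces you exhibit already exhaust $\mathbb{R}^n$ by dimension count), so your version is self-contained and closes the eigenvalue-pairing gap that the paper's shorter proof glosses over; the paper's route, in exchange, avoids any eigenspace computation beyond two determinant evaluations.
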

This theorem follows from the fact that $\Lambda_n=E_n-J$, where $E_n$ is the matrix whose all elements are $1$, meanwhile $E_n$ and $J$ can be simultaneously diagonalized.
The detailed  proof of this theorem can be found in the appendix.

\section{Problem Formulation}\label{sec:pformu}

In this paper, we consider only \textit{intrinsic formation control problem}, namely we try to design a distributed control law not containing any information about the desired formation. The construction of desired formations is totally attributable to the geometric properties of the space and the connection topology between the agents. The control law solving this kind of problems is referred to as \textit{intrinsic control law}.

In this paper, we focus on a specific solid formation, regular tetrahedron, in $\mathcal{S}^2$, in which the geodesic distance between every two reduced attitudes are identical, i.e. $d_{\mathcal{S}^2}(\Gamma_i,\Gamma_j)=\theta^*$, for any $i \neq j$, and $i,j \in \{1,2,3,4\}$. From the elementary geometry, we have $\theta^*=\arccos(-\frac{1}{3})$. We denote the set of regular tetrahedron formations by $\Omega_T$, where
\begin{equation}\label{eq:PF:OmegaT}
  \Omega_T=\left\{ \mathbf{\Gamma} \in (\mathcal{S}^2)^4 : \Gamma^T_i\Gamma_j = -\frac{1}{3}, \forall i \neq j \right\}.
\end{equation}
%

In our previous work \cite{WJ15ac}, we propose a control only containing the relative attitude $\{\widehat{\Gamma}_i\Gamma_j:j\in \mathcal{N}_i\}$ to achieve antipodal and cyclic formation for the ring graph topology. Here, we employ a similar but modified control for tetrahedron formation,

\begin{equation}\label{eq:PF:Control}
  \omega_i=-\sum_{j\in \mathcal{N}_i} f(\theta_{ij})\widehat{\Gamma}_i\Gamma_j, \; i\in \mathcal{V},
\end{equation}
where $\mathcal{V}=\{1,2,3,4\}$, $f: \mathbb{R} \longrightarrow \mathbb{R}$ is a real function.

 We note that, in this protocol, $\widehat{\Gamma}_i\Gamma_j$ is a relative reduced attitude information. Its coordinates resolved in the body frame of agent $i$ is $R_i^T \widehat{\Gamma}_i\Gamma_j = \mathbf{b}_i \times (R_i^T R_j\mathbf{b}_j)$, and $R_i^T R_j\mathbf{b}_j$ is just the body-fixed pointing axis of agent $j$ observed from agent $i$. Similarly, $\cos(\theta_{ij})=\Gamma_i^T\Gamma_j=\mathbf{b}_i^T(R_i^T R_j\mathbf{b}_j)$ is also relative information which can be measured from the body-frame of agent $i$. This is summarized as follows,
\begin{remark}\label{Rmk:PF:Control}
   The control law (\ref{eq:PF:Control}) only contains relative information between reduced attitudes.
\end{remark}

Substituting (\ref{eq:PF:Control}) into (\ref{eq:pre:DGamma}), the closed-loop system is obtained as

\begin{equation}\label{eq:PF:ClosedSystem}
  \dot{\Gamma}_i=\widehat{\Gamma}_i \sum_{j\in \mathcal{N}_i} f(\theta_{ij})\widehat{\Gamma}_i\Gamma_j, \; i\in \mathcal{V}.
\end{equation}

Using Rodrigues formula, one can show that the closed-loop system (\ref{eq:PF:ClosedSystem}) is invariant under any rotation operations.
\begin{lemma}\label{Thm:PF:rotationalInvariant}
For any rotation transformation about a unit axis $u \in \mathcal{S}^2$ through an angle $\theta \in [0,\pi]$, the system  (\ref{eq:PF:ClosedSystem}) is invariant, i.e. if
$$\widetilde{\Gamma}_i = exp(\theta \widehat{u})\Gamma_i, \; i \in \mathcal{V}$$
then the closed-loop system in terms of $ \widetilde{\Gamma}_i$ is
$$ \dot{\widetilde{\Gamma}}_i=\widehat{\widetilde{\Gamma}}_i \sum_{j\in \mathcal{N}_i} f(\theta_{ij})\widehat{\widetilde{\Gamma}}_i\widetilde{\Gamma}_j, \; i\in \mathcal{V}.$$
\end{lemma}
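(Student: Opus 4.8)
The plan is to exploit two facts: the rotation matrix $R := \exp(\theta\widehat{u})$ is constant in time, and both the hat operator and the Euclidean inner product are equivariant under $\mathcal{SO}(3)$. First I would differentiate the definition $\widetilde{\Gamma}_i = R\Gamma_i$. Since $R$ does not depend on $t$, this gives $\dot{\widetilde{\Gamma}}_i = R\dot{\Gamma}_i$, and substituting the closed-loop dynamics \eqref{eq:PF:ClosedSystem} yields
\begin{equation*}
  \dot{\widetilde{\Gamma}}_i = R\,\widehat{\Gamma}_i \sum_{j\in\mathcal{N}_i} f(\theta_{ij})\,\widehat{\Gamma}_i\Gamma_j.
\end{equation*}

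The central ingredient is the equivariance identity $R\widehat{x}R^T = \widehat{Rx}$, valid for every $R\in\mathcal{SO}(3)$ and $x\in\mathbb{R}^3$; it follows from $R(a\times b) = (Ra)\times(Rb)$ and can also be read off the Rodrigues formula. Applied to $x=\Gamma_i$, it reads $\widehat{\widetilde{\Gamma}}_i = R\widehat{\Gamma}_i R^T$, equivalently $\widehat{\Gamma}_i = R^T\widehat{\widetilde{\Gamma}}_i R$. I would substitute this for each occurrence of $\widehat{\Gamma}_i$ and also write $\Gamma_j = R^T\widetilde{\Gamma}_j$. Using $RR^T = R^TR = I$ to cancel adjacent $R$ and $R^T$ factors, and pulling the constant $R$ inside the finite sum, all the conjugating rotations telescope away and leave
\begin{equation*}
  \dot{\widetilde{\Gamma}}_i = \widehat{\widetilde{\Gamma}}_i \sum_{j\in\mathcal{N}_i} f(\theta_{ij})\,\widehat{\widetilde{\Gamma}}_i\widetilde{\Gamma}_j.
\end{equation*}

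It remains only to confirm that the gain arguments are unchanged, i.e.\ that $\theta_{ij}$ computed from $\widetilde{\Gamma}$ coincides with $\theta_{ij}$ computed from $\Gamma$. This is immediate from orthogonality: $\widetilde{\Gamma}_i^T\widetilde{\Gamma}_j = \Gamma_i^T R^T R\,\Gamma_j = \Gamma_i^T\Gamma_j$, so $\arccos(\widetilde{\Gamma}_i^T\widetilde{\Gamma}_j) = \theta_{ij}$ and the same $f(\theta_{ij})$ appears in the transformed dynamics. I do not anticipate a genuine obstacle here; the only points requiring care are the bookkeeping of the placement of $R$ and $R^T$ so that every conjugation collapses correctly, and an honest verification of the equivariance identity $R\widehat{x}R^T = \widehat{Rx}$ rather than taking it for granted. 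Everything else is mechanical.
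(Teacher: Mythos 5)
Your proposal is correct and complete: differentiating $\widetilde{\Gamma}_i = R\Gamma_i$ with constant $R=\exp(\theta\widehat{u})$, conjugating via the equivariance identity $R\widehat{x}R^T=\widehat{Rx}$, and noting that orthogonality preserves $\theta_{ij}$ is exactly the argument the paper alludes to when it says the invariance follows ``using Rodrigues formula'' (the identity you verify is precisely what the Rodrigues formula yields there). The paper gives no more detail than that one-line remark, so your write-up is simply the fleshed-out version of the same approach.
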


Using the parametrization (\ref{eq:pre:RPY}), after some rather tedious computation, the closed-loop system (\ref{eq:PF:ClosedSystem}) under this coordinates can be written as
\begin{eqnarray}
&\cos(\phi_i)\dot{\psi}_i=\! \sum\limits_{j\in \mathcal{N}_i}{f(\theta_{ij})\sin(\psi_i\!-\!\psi_j) \cos(\phi_j)}, \label{eq:PF:ClosedSysRPY1}\\
&\dot{\phi}_i \!=\!\!\!\sum\limits_{j\in \mathcal{N}_i}{\!f(\theta_{ij})\!\left[ \sin(\phi_i\!) \cos(\psi_j\!-\!\psi_i\!) \!-\! \cos(\phi_i\!) \sin(\phi_j\!) \right]} \label{eq:PF:ClosedSysRPY2},
\end{eqnarray}
where $i \in \mathcal{V}$ and the geodesic distance $\theta_{ij}$ can be computed in terms of $\{\phi_i,\psi_i\}_{i \in \mathcal{V}}$ as following
\begin{equation}\label{eq:PF:ClosedSysRPYRef}
\begin{aligned}
 \theta_{ij}=arccos \big(\cos(\phi_i) \cos(\phi_j)&\cos(\psi_i -\psi_j)\\
 &+ \sin(\phi_i) \sin(\phi_j)  \big).
\end{aligned}
\end{equation}


\section{Stationary Regular Tetrahedron Formation}\label{sec:SF}
In this section, we will investigate the almost global stability of a stationary regular tetrahedron formation under the proposed intrinsic control. Since the intrinsic formation problem depends on both the spacial properties and the inter-agent topology, we need to design the topology of inter-agent connection as follows:
\begin{graph}\label{As:SF:completegraph}
  The inter-agent graph $\mathcal{G}$ is a completed graph, i.e. $(i,j) \in \mathcal{E}$, for any $i,j \in \mathcal{V}$ and $i\neq j$.
\end{graph}

\subsection {Stability of Equilibria Set}
Next, we show that under \emph{graph} \ref{As:SF:completegraph}, every trajectory of closed-loop system (\ref{eq:PF:ClosedSystem}) will asymptomatically converge to some equilibrium.
\begin{theorem}\label{Thm:SF:stableEquiSet}
  Under Graph \ref{As:SF:completegraph}, if the function $f(\cdot)$ in control (\ref{eq:PF:Control}) is nonnegative and bounded almost everywhere on $[0,\pi]$ ,
then for closed-loop system (\ref{eq:PF:ClosedSystem}), the equilibria set  $\Omega$ is globally asymptomatically stable, where
\begin{equation}\label{eq:PF:Omega}
\Omega = \Big\{  \mathbf{\Gamma} \in (\mathcal{S}^2)^4:  \sum_{j\in \mathcal{N}_i} f(\theta_{ij})\widehat{\Gamma}_i\Gamma_j=0, i\in \mathcal{V} \Big\}.
\end{equation}
\end{theorem}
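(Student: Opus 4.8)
\emph{Proof idea.} The plan is to exhibit the closed-loop system as a gradient flow on the compact manifold $(\mathcal{S}^2)^4$ and to invoke an invariance principle. First I would simplify the right-hand side of (\ref{eq:PF:ClosedSystem}) using the identity $\widehat{\Gamma}_i^2=\Gamma_i\Gamma_i^T-I$, valid since $\|\Gamma_i\|=1$. Applying it to each term gives $\widehat{\Gamma}_i\widehat{\Gamma}_i\Gamma_j=(\Gamma_i\Gamma_i^T-I)\Gamma_j=\cos(\theta_{ij})\Gamma_i-\Gamma_j$, so the dynamics become
\begin{equation*}
\dot{\Gamma}_i=\sum_{j\in\mathcal{N}_i}f(\theta_{ij})\bigl(\cos(\theta_{ij})\Gamma_i-\Gamma_j\bigr)=-\,(I-\Gamma_i\Gamma_i^T)\!\!\sum_{j\in\mathcal{N}_i}\!\!f(\theta_{ij})\Gamma_j .
\end{equation*}
This is the tangential (i.e.\ $T_{\Gamma_i}\mathcal{S}^2$) projection of $-\sum_j f(\theta_{ij})\Gamma_j$. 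Since $(I-\Gamma_i\Gamma_i^T)v=0$ for $v$ tangent at $\Gamma_i$ forces $v$ collinear with $\Gamma_i$, one checks that $\dot{\Gamma}_i=0$ is equivalent to $\sum_{j\in\mathcal{N}_i}f(\theta_{ij})\widehat{\Gamma}_i\Gamma_j=0$; hence the common zero set of the $\dot{\Gamma}_i$ is exactly the equilibria set $\Omega$ in (\ref{eq:PF:Omega}).

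Next I would build the Lyapunov function. Put $g(\theta)=\int_0^{\theta}f(s)\sin(s)\,ds$, which is well defined and, because $f\ge 0$ is essentially bounded, nonnegative, nondecreasing and bounded (indeed $0\le g\le 2\,\mathrm{ess\,sup}\,|f|$); it is moreover Lipschitz. Define $V=-\sum_{i<j}g(\theta_{ij})$ on $(\mathcal{S}^2)^4$, which is then continuous and bounded. The key computation is the Riemannian gradient of $V$ with respect to $\Gamma_i$: since $\theta_{ij}=\arccos(\Gamma_i^T\Gamma_j)$ has tangential gradient $-\tfrac{1}{\sin(\theta_{ij})}(I-\Gamma_i\Gamma_i^T)\Gamma_j$ and $g'(\theta)=f(\theta)\sin(\theta)$, the $\sin(\theta_{ij})$ factors cancel and I obtain $\mathrm{grad}_{\Gamma_i}V=\sum_{j\in\mathcal{N}_i}f(\theta_{ij})(I-\Gamma_i\Gamma_i^T)\Gamma_j=-\dot{\Gamma}_i$ (using completeness of $\mathcal{G}$, so that $\mathcal{N}_i=\mathcal{V}\setminus\{i\}$ and every pair appears). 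Thus the closed loop is the negative gradient flow of $V$, giving $\dot{V}=\sum_i\langle\mathrm{grad}_{\Gamma_i}V,\dot{\Gamma}_i\rangle=-\sum_{i}\|\dot{\Gamma}_i\|^2\le 0$, with equality precisely on $\Omega$.

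With this in hand I would close the argument by LaSalle's invariance principle. The state space $(\mathcal{S}^2)^4$ is compact, so every trajectory is precompact and $V$ is bounded along it; the monotone bounded $V$ together with $\dot{V}\le 0$ forces convergence to the largest invariant set contained in $\{\dot{V}=0\}=\{\dot{\Gamma}_i=0,\ \forall i\}=\Omega$. As $\Omega$ consists entirely of equilibria it is itself invariant and equals that largest invariant set, yielding global asymptotic convergence to $\Omega$; combined with $\dot V\le0$ (Lyapunov stability of the level sets) this gives global asymptotic stability.

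The main obstacle I anticipate is the weak regularity of $f$: it is only assumed nonnegative and essentially bounded, hence possibly discontinuous, so the vector field in (\ref{eq:PF:ClosedSystem}) need not be continuous and classical LaSalle does not apply verbatim. I would handle this by working with Carath\'eodory (or Filippov) solutions and a nonsmooth invariance principle: the cancellation of $\sin(\theta_{ij})$ noted above shows the right-hand side stays bounded even at the antipodal/coincident configurations where $k_{ij}$ is undefined, and $g$ being Lipschitz makes $V$ Lipschitz, so $\dot V=-\sum_i\|\dot\Gamma_i\|^2$ holds for almost every $t$ along any solution. Verifying that these a.e.\ identities suffice to run LaSalle, and that $\{\dot V=0\}$ coincides exactly with $\Omega$ under this reduced regularity, is the step requiring the most care.
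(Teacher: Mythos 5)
Your proposal is correct and takes essentially the same route as the paper: your $V=-\sum_{i<j}\int_0^{\theta_{ij}}f(s)\sin(s)\,ds$ differs from the paper's $\sum_{(i,j)\in\mathcal{E}}\int_{\theta_{ij}}^{\pi}f(x)\sin(x)\,dx$ only by an additive constant (and a factor of $2$ from counting each undirected edge twice), and both arguments conclude identically via $\dot V=-\sum_i\|\dot\Gamma_i\|^2\le 0$ and LaSalle's invariance principle on the compact space $(\mathcal{S}^2)^4$. Your gradient-flow framing and your flag about the merely measurable $f$ (Carath\'eodory solutions, a.e.\ chain rule for the Lipschitz $g$) are legitimate refinements of a point the paper silently glosses over, but they do not alter the core argument.
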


\begin{proof}
  Define a candidate Lyapunov function $V(\mathbf{\Gamma})$ as
$$V(\mathbf{\Gamma})=\sum\limits_{(i,j) \in \mathcal{E}} \int_{{\theta _{ij}}}^\pi  {f(x)\sin (x)dx} ,$$
where $\theta_{ij}=\arccos(\Gamma_i^T \Gamma_j)$. Since $\theta _{ij} \in [0,\pi]$, $V(\mathbf{\Gamma}) \geq 0$, for any $\mathbf{\Gamma} \in (\mathcal{S}^2)^4 $. Due to the fact $\widehat{\Gamma}_i^T=-\widehat{\Gamma}_i$, the time derivative of $V(\mathbf{\Gamma})$ along the trajectory of the closed-loop system (\ref{eq:PF:ClosedSystem}) satisfies
\begin{equation*}
    \begin{split}
      \dot{V}(\mathbf{\Gamma})&=\sum\limits_{(i,j) \in \mathcal{E}} -f(\theta_{ij})\sin(\theta_{ij})(\theta_{ij})'\\
      &= \sum\limits_{(i,j) \in \mathcal{E}} f(\theta_{ij})\left(\Gamma_i^T \Gamma_j \right)'\\
        &=-\sum\limits_{(i,j) \in \mathcal{E}}  f(\theta_{ij})\Bigg((\widehat{\Gamma}_j \Gamma_i)^T  \sum_{k\in \mathcal{N}_j} f(\theta_{jk})\widehat{\Gamma}_j\Gamma_k  \\
        &\;\;\;\;\;\; \;\;\;\;\;\;\;\; \;\;\;\;\; \;\;\;\;\;\;\;\;  +(\widehat{\Gamma}_i \Gamma_j)^T   \sum_{k\in \mathcal{N}_i}  f(\theta_{ik})\widehat{\Gamma}_i\Gamma_k   \Bigg)\\
        &=-2\sum\limits_{i=1}^4 \Bigg( \sum_{j\in \mathcal{N}_i} f(\theta_{ij})\widehat{\Gamma}_i\Gamma_j  \Bigg)^2
    \end{split}
\end{equation*}
%

We note that the equilibria set $\Omega$ is the largest invariant set contained in itself. Since $\dot{V}(\mathbf{\Gamma}) \leq 0$ and $\mathcal{S}^2$ is a compact space, by LaSalle's invariance principle \cite{KhalilBook}, the trajectory of the system (\ref{eq:PF:ClosedSystem}) will globally asymptomatically approach the set $\Omega$.
\end{proof}

Revisiting our desired formations, we have the following relationship between $\Omega$ and $\Omega_T$:
\begin{remark}\label{Rmk:SF:OmegaTinEqm}
  Under the graph \ref{As:SF:completegraph}, regardless of the choice of function $f(\cdot)$ in control (\ref{eq:PF:Control}), the regular tetrahedron formations are always equilibria of closed-loop system (\ref{eq:PF:ClosedSystem}), i.e. $\Omega_T \subseteq \Omega$.
\end{remark}

Since $(\mathcal{S}^2)^n$ is a compact space without boundary, any continuous time-invariant feedback control always yields multiple disjoint equilibria manifolds \cite{Bhat00scl,WJ15ac}. This leads to that there exist other equilibria manifolds in $\Omega$ except regular tetrahedron formations $\Omega_T$. Hence though Theorem \ref{Thm:SF:stableEquiSet} reveals that the whole equilibria set is globally asymptotically stable, the stability of desired formations $\Omega_T$ is not settled yet.

\subsection {Stability of Regular Tetrahedron Formations}
This subsection will focus on the local stability of the desired formation. We show first the nonlinear system (\ref{eq:PF:ClosedSystem}) possesses a 3-dimensional center manifold. Furthermore, by introducing a novel coordinates transformation, the analytic description of this center manifold is obtained. Based on this, it is shown that this center manifold is exactly $\Omega_T$, which guarantees the local stability of the desired formation.

\begin{assumption} \label{As:SF:f}
  The gain function $f(\cdot)$ in control (\ref{eq:PF:Control}) is bounded and satisfies
$$f(\cdot)> 0, \; \dot{f}(\cdot)< 0 \;\; on \;(0,\pi) .$$
\end{assumption}

Then we first investigate the stability of a specific reduced attitude $\mathbf{\Gamma}^*\in \Omega_T$, whose parametrization under the coordinates (\ref{eq:pre:RPY}), denoted by $\{(\psi^*_i,\phi^*_i)\}_{i \in \mathcal{V}}$ , satisfies:

\begin{equation}\label{eq:SF:proofPoint}
\begin{array}{cc}
  \begin{cases}
  \psi^*_i=\frac{\pi}{4} + (i-1)\frac{\pi}{2} \\
  \phi^*_i=(-1)^{(i-1)}arcsin(\frac{\sqrt{3}}{3})
\end{cases} & i \in \mathcal{V}
\end{array}.
\end{equation}

Let $\overline{\psi}_i=\psi_i-\psi^*_i$, $\overline{\phi}_i=\phi_i-\phi^*_i$. The closed-loop system (\ref{eq:PF:ClosedSysRPY1})-(\ref{eq:PF:ClosedSysRPY2}) can be expanded near $\mathbf{\Gamma}^*$ as
\begin{equation}\label{eq:SF:proofLinear}
\dot{\zeta}=A _\zeta(\mathbf{\Gamma}^*) \zeta+ g(\zeta),
\end{equation}
where
$\zeta=[\overline{\phi}_1,\overline{\psi}_1,\overline{\phi}_2,\overline{\psi}_2,\overline{\phi}_3,\overline{\psi}_3,\overline{\phi}_4,\overline{\psi}_4]^T$, and $g({0})={0}$, $\frac{\partial g}{\partial \zeta}({0})=0$. The fact $\mathbf{\Gamma}^*\in \Omega_T$ provides that $\theta^*_{ij}=\arccos(-\frac{1}{3})$, for any $i \neq j$. So we can separate $f(\cdot)$,$\dot{f}(\cdot)$ in the matrix $A _\zeta(\mathbf{\Gamma}^*)$, and $A _\zeta(\mathbf{\Gamma}^*)$ will be rewritten as the following form:
$$ A _\zeta(\mathbf{\Gamma}^*)=A_{\zeta}^1\cdot f(\theta^*)+ A_{\zeta}^2\cdot \dot{f}(\theta^*), $$
where $A_{\zeta}^1, A_{\zeta}^2 \in \mathbb{R}^{8 \times 8}$, $\theta^*=\arccos(-\frac{1}{3})$.

Now we can explain why we elaborately select a specific $\mathbf{\Gamma}^*$ in (\ref{eq:SF:proofPoint}) . By this choice, we make the matrices $A_{\zeta}^1$ and $A_{\zeta}^2$ commutative, i.e. $A_{\zeta}^1 A_{\zeta}^2=A_{\zeta}^2 A_{\zeta}^1$. By \emph{Lemma} \ref{Thm:pre:simuDiagonalize},there exists an invertible matrix $P$ such that
$$ A _\zeta(\mathbf{\Gamma}^*)=P^{-1} \left( D^1\cdot f(\theta^*)+ D^2\cdot \dot{f}(\theta^*)\right)P, $$
where $D^1,D^2 \in \mathbb{R}^{8 \times 8}$ and they are both diagonal matrices. So the elements on the diagonal of matrix $D^1\cdot f(\theta^*)+ D^2\cdot \dot{f}(\theta^*)$ are the spectrum of $A _\zeta(\mathbf{\Gamma}^*)$, which can be computed as
\begin{equation}\label{eq:SF:proofEigen}
\begin{split}
  &\lambda_{1,2}=2\sqrt2\dot{f}(\theta^*),\\
  &\lambda_{3,4,5}=\frac{4\sqrt2}{3}\dot{f}(\theta^*)-\frac{8}{3}f(\theta^*),\\
  &\lambda_{6,7,8}=0.
\end{split}
\end{equation}
By the center manifold theory, it is implied that under Assumption \ref{As:SF:f}, the closed-loop system (\ref{eq:PF:ClosedSystem}) possesses a 3-dimensional center manifold $M_c(\mathbf{\Gamma}^*)$ near $\mathbf{\Gamma}^*$.

Furthermore for any formation $\mathbf{\Gamma} \in \Omega_T$, we can transform it to $\mathbf{\Gamma}^*$ by a rotational transformation, according to \emph{Lemma} \ref{Thm:PF:rotationalInvariant}, then we obtain the following lemma.

\begin{lemma}\label{Thm:SF:ExistCM}
Under Graph \ref{As:SF:completegraph} and Assumption \ref{As:SF:f}, for closed-loop system (\ref{eq:PF:ClosedSystem}), there exists a 3-dimensional center manifold $M_c(\mathbf{\Gamma})$ at any formation $\mathbf{\Gamma}\in \Omega_T$.
\end{lemma}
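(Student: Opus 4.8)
The plan is to reduce the statement at a general formation to the single, explicitly analyzed configuration $\mathbf{\Gamma}^*$ of (\ref{eq:SF:proofPoint}), and then to transport the center manifold produced there across all of $\Omega_T$ using the symmetry of the dynamics.

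First I would invoke the spectrum (\ref{eq:SF:proofEigen}) already obtained for the linearization $A_\zeta(\mathbf{\Gamma}^*)$ via the commuting-matrix diagonalization of Lemma \ref{Thm:pre:simuDiagonalize}. Under Assumption \ref{As:SF:f} one has $f(\theta^*)>0$ and $\dot f(\theta^*)<0$ (since $\theta^*=\arccos(-\tfrac13)\in(0,\pi)$), so $\lambda_{1,2}$ and $\lambda_{3,4,5}$ lie strictly in the open left half-plane while $\lambda_{6,7,8}=0$, and no eigenvalue has positive real part. The center manifold theorem --- which does not require the equilibrium to be isolated, an important point since $\mathbf{\Gamma}^*$ sits on the continuum $\Omega_T$ of equilibria --- then provides a local three-dimensional invariant center manifold $M_c(\mathbf{\Gamma}^*)$, tangent at $\mathbf{\Gamma}^*$ to the three-dimensional zero eigenspace.

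Next I would propagate this manifold by the rotational invariance of Lemma \ref{Thm:PF:rotationalInvariant}. For $R=\exp(\theta\widehat u)\in\mathcal{SO}(3)$ the diagonal map $\Phi_R:\mathbf{\Gamma}\mapsto(R\Gamma_1,\dots,R\Gamma_4)$ is a diffeomorphism of $(\mathcal S^2)^4$ conjugating the closed-loop field (\ref{eq:PF:ClosedSystem}) to itself; hence it sends equilibria to equilibria, its differential conjugates the Jacobian so that the spectrum (\ref{eq:SF:proofEigen}) is preserved at $\Phi_R(\mathbf{\Gamma}^*)$, and it maps $M_c(\mathbf{\Gamma}^*)$ to a genuine three-dimensional center manifold at $\Phi_R(\mathbf{\Gamma}^*)$. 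Because any two labeled regular tetrahedra are related by some orthogonal transformation, this covers at least every $\mathbf{\Gamma}$ lying in the $\mathcal{SO}(3)$-orbit of $\mathbf{\Gamma}^*$.

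The main obstacle is that $\Omega_T$ is not a single rotation-orbit. Writing an arbitrary labeled regular tetrahedron as $O\mathbf{\Gamma}^*$ fixes $O\in\mathcal{O}(3)$ uniquely (four non-coplanar vertices), and $\det O=\pm1$ separates two relabelling parities, so $\Omega_T$ consists of two $\mathcal{SO}(3)$-orbits and Lemma \ref{Thm:PF:rotationalInvariant} reaches only one of them. I would close this gap by upgrading the invariance to the full orthogonal group: using $\widehat{Ov}=\det(O)\,O\widehat v O^{T}$, the two hat-factors in $\widehat{\Gamma}_i\,f(\theta_{ij})\widehat{\Gamma}_i\Gamma_j$ together contribute $\det(O)^2=1$, while the angles $\theta_{ij}$ are orthogonal invariants, so $\Phi_O$ is in fact a symmetry for every $O\in\mathcal{O}(3)$. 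Transporting $M_c(\mathbf{\Gamma}^*)$ by a reflection then supplies the center manifold on the second orbit; equivalently, one may rerun the commuting-matrix computation at a representative of that orbit, which by the same algebra returns the spectrum (\ref{eq:SF:proofEigen}). Either route establishes a three-dimensional center manifold at every $\mathbf{\Gamma}\in\Omega_T$.
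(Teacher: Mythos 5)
Your proposal is correct and follows the same route as the paper: linearize at the special configuration $\mathbf{\Gamma}^*$ of (\ref{eq:SF:proofPoint}), use the commuting decomposition $A_\zeta(\mathbf{\Gamma}^*)=A_\zeta^1 f(\theta^*)+A_\zeta^2\dot f(\theta^*)$ and Lemma \ref{Thm:pre:simuDiagonalize} to get the spectrum (\ref{eq:SF:proofEigen}), note that under Assumption \ref{As:SF:f} five eigenvalues are strictly negative and three are zero, invoke the center manifold theorem (which indeed does not require the equilibrium to be isolated), and transport the resulting manifold along $\Omega_T$ by the symmetry of Lemma \ref{Thm:PF:rotationalInvariant}. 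Where you go beyond the paper is in the transport step: the paper simply asserts that any $\mathbf{\Gamma}\in\Omega_T$ can be rotated to $\mathbf{\Gamma}^*$, but as you observe this is literally false, since the labeled set $\Omega_T$ is diffeomorphic to $\mathcal{O}(3)$ --- the orthogonal map $O$ with $\mathbf{\Gamma}=O\mathbf{\Gamma}^*$ is unique because the four attitudes span $\mathbb{R}^3$, and $\det O=\pm 1$ splits $\Omega_T$ into two $\mathcal{SO}(3)$-orbits, only one of which Lemma \ref{Thm:PF:rotationalInvariant} reaches. Your repair is sound: by $\widehat{Ov}=\det(O)\,O\widehat{v}O^T$ the two hat factors in the closed-loop field contribute $\det(O)^2=1$ while the $\theta_{ij}$ are $\mathcal{O}(3)$-invariant, so the dynamics (\ref{eq:PF:ClosedSystem}) is equivariant under the full orthogonal group and a reflection carries $M_c(\mathbf{\Gamma}^*)$ to the mirror component. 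An equally quick alternative patch, worth noting, is that the complete graph and common gain make the closed-loop system invariant under permutations of the agent labels, and a transposition composed with a rotation also reaches the second component; either way, your argument closes a genuine (if minor) gap that the paper's one-line transfer glosses over, without changing its overall structure.
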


\emph{Lemma}~\ref{Thm:SF:ExistCM} provides the existence of a 3-dimensional center manifold $M_c(\mathbf{\Gamma})$. However the center manifold of a nonlinear system, in general, cannot be expressed analytically. In the following part, we introduce a new coordinate transformation, by which the analytical description of $M_c(\mathbf{\Gamma})$ can be derived.

\begin{figure}[t]
  \centering
  \includegraphics[width=0.5\textwidth]{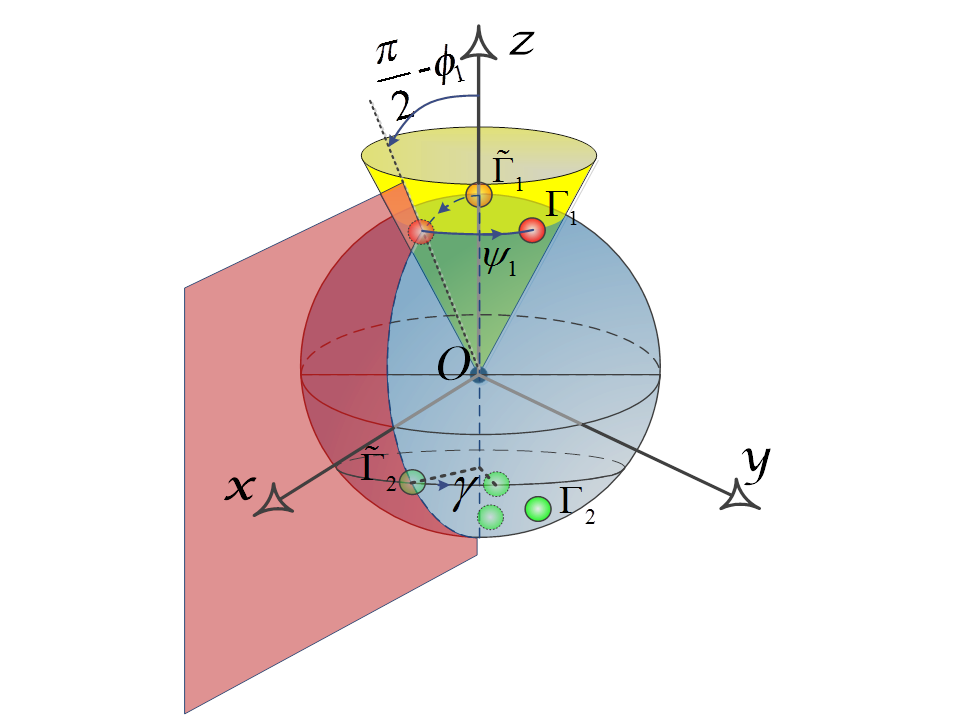}\\
  \caption{Depiction of coordinates for absolute components}\label{Fig:SF:Coordinate}
\end{figure}

We set a new coordinates system consisting of the relative attitude between every two agents $i$,$j$ and the absolute attitude of the whole formation. This transformation is denoted by $\xi=[\xi_s^T,\xi_c^T]^T=\Phi(\mathbf{\Gamma})$, for every $\mathbf{\Gamma}\in (\mathcal{S}^2)^4$.

In coordinates $\xi$, the component $\xi_s=[\cos(\theta_{12}),\cos(\theta_{13}),$\\$\cos(\theta_{14}),\cos(\theta_{23}),\cos(\theta_{24}),\cos(\theta_{34})]^T$ represents the relative attitude. The absolute attitude component $\xi_c=[\phi_1,\psi_1,\gamma]^T$ describes the attitude of $\Gamma_1, \Gamma_2$, or equivalently the whole formation, relative to the inertial frame $\emph{O-XYZ}$. More specifically, the first two elements of $\xi_c$ are the RPY coordinates of $\Gamma_1$ satisfying the parametrization (\ref{eq:pre:RPY}). The third element in $\xi_c$ can be determined from the following rotation operations, which is illustratively depicted in Fig.\ref{Fig:SF:Coordinate}. Let $\widetilde{\Gamma}_1=[0,0,1]^T$ overlapping with the north pole, and $\widetilde{\Gamma}_2=[\sin(\theta_{12}),0,\cos(\theta_{12})]^T$ on the plane $\emph{XOZ}$. Then $\widetilde{\Gamma}_1$, $\widetilde{\Gamma}_2$ can coincide with $\Gamma_1$ and $\Gamma_2$ respectively, by a series of rotations consisting of rotating about the \emph{Z}-axis trough an angle $\gamma$, then the \emph{Y}-axis through $\frac{\pi}{2}-\phi_1$, and finally the \emph{Z}-axis through $\psi_1$. Hence, we can express the transformation equation corresponding to these rotation operation as $\Gamma_j= R_3(\psi_1)R_2(\phi_1)R_1(\gamma)\widetilde{\Gamma}_j$, where $j\in \{1,2\}$ and
\begin{eqnarray}\label{eq:SF:rotMatrix}
         &&R_1(\gamma)\!=\!
         \begin{pmatrix}
           \cos\!\gamma & -\!\sin\!\gamma &0\\
           \sin\!\gamma &\cos\!\gamma & 0\\
           0& 0 & 1
         \end{pmatrix}\!,
         R_2(\phi_1)\!=\!
         \begin{pmatrix}\!
           \sin\!\phi_1 & \!0 &\cos\!\phi_1\!\\
           0 & \!1 & 0\\
           \!-\!\cos\!\phi_1 & \!0 & \sin\!\phi_1\!
         \end{pmatrix},\nonumber \\
        &&R_3(\psi_1)\!=\!
         \begin{pmatrix}
           \cos\!\psi_1&-\!\sin\!\psi_1&0\\
           \sin\!\psi_1&\cos\!\psi_1&0\\
           0& 0 & 1
         \end{pmatrix}.
\end{eqnarray}
From this rotation transformation, the third element of $\xi_c$, angle $\gamma$, can be determined.

Now, we can explicitly work out this coordinate transformation for $\mathbf{\Gamma}\in (\mathcal{S}^2)^4$:
\begin{gather}\label{eq:SF:newCoorTrans}
\xi=\Phi(\mathbf{\Gamma})=[\xi_s^T,\xi_c^T]^T,
\end{gather}
 and every element of $\xi_s$ and $\xi_c$ can be completely computed as follows:
\begin{equation}\label{eq:SF:newCoorTransPara}
\begin{split}
  \theta_{ij}&=\arccos(\Gamma_i^T\Gamma_j), \;\; i,j \in \mathcal{V}\\
  \phi_k &=  \arcsin([\Gamma_{k}]_3), \\
  \psi_k &= \mathrm{atan2}([\Gamma_{k}]_2,[\Gamma_{k}]_1), \;\; k\in \{1,2\}\\
  \gamma\; &=  \mathrm{atan2}( \gamma_y,\gamma_x ),\\
  \gamma_y&=\cos(\phi_2)\sin(\psi_2-\psi_1),\\
  \gamma_x&=\sin(\phi_1\!)\cos(\phi_2\!)\cos(\psi_2\!-\!\psi_1\!)\!-\!\cos(\phi_1\!)\sin(\phi_2\!),
\end{split}
\end{equation}
where $[\cdot]_p$ is the $p$-th element of a 3-dimensional vector, and $\mathit{atan2}(\cdot)$ is the arctangent function with two arguments \cite{ZXLi}. For integrity of the definition (\ref{eq:SF:newCoorTransPara}), we supplement additional definitions for two singular cases, which are $\psi_k=0$ when $\Gamma_{k}=[1,0,0]^T$ and $\gamma=0$ when $\Gamma_{1}=\Gamma_{2}$. After this amendment, we have the following remark.

\begin{remark}
  The coordinate transformation $\xi=\Phi(\mathbf{\Gamma})$ in (\ref{eq:SF:newCoorTrans}) is diffeomorphic everywhere except on the boundary.
\end{remark}

We note that although the coordinates $\xi$ in (\ref{eq:SF:newCoorTrans}) contains nine elements, the number of independent elements is only eight. This is the consequence of the following Lemma:

\begin{lemma}
  For any $\mathbf{\Gamma}\in (\mathcal{S}^2)^4$, we have the following identity \cite{Todhunter_Identity}:
  \begin{equation}\nonumber
    \begin{split}
    &\cos^{2}\!(\theta_{12}\!)\!\!+\!\cos^{2}\!(\theta_{13}\!)\!\!+\!\cos^{2}\!(\theta_{14}\!)\!\!+\!\cos^{2}\!(\theta_{23}\!)\!\!+\!\cos^{2}\!(\theta_{24}\!)\!\!+\!\cos^{2}\!(\theta_{34}\!)\\
    &-\!\cos^{2}\!(\theta_{14}\!)\cos^{2}\!(\theta_{23}\!)\!-\!\cos^{2}\!(\theta_{13}\!)\cos^{2}\!(\theta_{24}\!)\!-\!\cos^{2}\!(\theta_{12}\!)\cos^{2}\!(\theta_{34}\!)\\
    &-2[\cos(\theta_{12}\!)\cos(\theta_{13}\!)\cos(\theta_{23}\!)\!+\!\cos(\theta_{23}\!)\cos(\theta_{24}\!)\cos(\theta_{34}\!)\\
    &\;\;\;\;\;\;+\!\cos(\theta_{13}\!)\cos(\theta_{14}\!)\cos(\theta_{34}\!)\!+\!\cos(\theta_{12}\!)\cos(\theta_{14}\!)\cos(\theta_{24}\!)]\\
    &+2[\cos(\theta_{13}\!)\cos(\theta_{14}\!)\cos(\theta_{23}\!)\cos(\theta_{24}\!)\!+\!\cos(\theta_{12}\!)\cos(\theta_{13}\!)\\
    &\;\;\;\;\;\;\times\cos(\theta_{24}\!)\cos(\theta_{34}\!)\!+\!\cos(\theta_{12}\!)\cos(\theta_{14}\!)\cos(\theta_{23}\!)\cos(\theta_{34}\!)]\\
    &\equiv 1,
    \end{split}
  \end{equation}
  where $\theta_{ij}=\arccos(\Gamma_i^T\Gamma_j)$, and $i,j \in \mathcal{V}$.
\end{lemma}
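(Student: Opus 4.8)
The plan is to recognize the left-hand side as (one minus) the determinant of the Gram matrix of the four reduced attitudes, and then to invoke the elementary fact that four vectors in $\mathbb{R}^3$ are linearly dependent. Write $c_{ij}:=\cos(\theta_{ij})=\Gamma_i^T\Gamma_j$ for the cosines, and call $L(\mathbf{\Gamma})$ the polynomial on the left-hand side of the claimed identity. The goal is to exhibit $L(\mathbf{\Gamma})=1-\det(G)$ and then argue $\det(G)=0$.

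First I would assemble the matrix $V=[\Gamma_1,\Gamma_2,\Gamma_3,\Gamma_4]\in\mathbb{R}^{3\times4}$ whose columns are the four unit vectors, and form the Gram matrix $G=V^TV\in\mathbb{R}^{4\times4}$. By the definition of the angles, $[G]_{ij}=\Gamma_i^T\Gamma_j=c_{ij}$ for $i\neq j$ and $[G]_{ii}=\|\Gamma_i\|^2=1$, so
$$G=\begin{pmatrix} 1 & c_{12} & c_{13} & c_{14} \\ c_{12} & 1 & c_{23} & c_{24} \\ c_{13} & c_{23} & 1 & c_{34} \\ c_{14} & c_{24} & c_{34} & 1 \end{pmatrix}.$$
The key observation is the rank bound: since $G=V^TV$, we have $\mathrm{rank}(G)=\mathrm{rank}(V)\leq 3$ because $V$ has only three rows. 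Hence the $4\times4$ matrix $G$ is singular and $\det(G)=0$ for \emph{every} configuration $\mathbf{\Gamma}\in(\mathcal{S}^2)^4$, independently of the particular attitudes.

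It then remains to expand $\det(G)$ and match it to the stated polynomial. Expanding by cofactors along the first row and collecting the monomials by degree in the six variables $c_{ij}$, I would verify
$$\det(G)=1-L(\mathbf{\Gamma}),$$
where $L(\mathbf{\Gamma})$ reproduces exactly the four displayed blocks: the quadratic part $\sum_{i<j}c_{ij}^2$, the negative product-of-two-squares part $-(c_{12}^2c_{34}^2+c_{13}^2c_{24}^2+c_{14}^2c_{23}^2)$, the $-2$ times the four cubic terms, and the $+2$ times the three quartic cross terms, each appearing with the sign and multiplicity shown in the statement. Combining $\det(G)=0$ with $\det(G)=1-L(\mathbf{\Gamma})$ yields $L(\mathbf{\Gamma})\equiv1$, which is the asserted identity.

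The routine but most error-prone step will be the determinant expansion together with the bookkeeping of the degree-two, degree-three, and degree-four monomials with their correct signs and coefficients; the conceptual content is immediate once $G$ is written down, namely that the identity is nothing more than the vanishing of the Gram determinant forced by the three-dimensionality of the ambient space. (I note this also explains the earlier remark that, although $\xi$ carries nine scalar entries with six in $\xi_s$, only eight are independent: the single algebraic relation $\det(G)=0$ among the six $c_{ij}$ removes exactly one degree of freedom from $\xi_s$.)
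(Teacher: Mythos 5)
Your proposal is correct, and it is a genuinely different (indeed, more self-contained) route than the paper's: the paper does not prove this lemma at all, but simply quotes it from the classical spherical-trigonometry literature (the citation to Todhunter), where it appears as the relation among the six arcs joining four points on a sphere. Your argument replaces that external reference with a two-line piece of linear algebra: since $V=[\Gamma_1,\Gamma_2,\Gamma_3,\Gamma_4]\in\mathbb{R}^{3\times 4}$ has rank at most $3$, the Gram matrix $G=V^TV$ is singular, and the identity is exactly $\det(G)=0$. The only step you deferred --- the cofactor expansion --- does check out: writing $a=\cos\theta_{12}$, $b=\cos\theta_{13}$, $c=\cos\theta_{14}$, $d=\cos\theta_{23}$, $e=\cos\theta_{24}$, $f=\cos\theta_{34}$, expansion along the first row gives
\[
\det(G)=1-(a^2\!+\!b^2\!+\!c^2\!+\!d^2\!+\!e^2\!+\!f^2)+(a^2f^2\!+\!b^2e^2\!+\!c^2d^2)+2(abd+ace+bcf+def)-2(abef+acdf+bcde),
\]
which reproduces the four displayed blocks of the lemma with precisely the stated signs and multiplicities, so $1-\det(G)$ is the left-hand side and the identity follows. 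What your approach buys beyond the citation is twofold: it is elementary and verifiable within the paper, and it makes conceptually transparent \emph{why} the relation holds (it is the unique algebraic constraint forced on the six pairwise cosines by the three-dimensionality of the ambient space), which in turn cleanly justifies the paper's surrounding remark that only eight of the nine entries of $\xi$ are independent --- the single relation $\det(G)=0$ removes exactly one degree of freedom from $\xi_s$.
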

Therefore the degrees of freedom of $\xi$ are  in fact consistent with those of $(\mathcal{S}^2)^4$.

Then by transformation (\ref{eq:SF:newCoorTrans}) and \emph{Lemma} \ref{Thm:PR:cosine}, after some involved algebraic manipulation, the closed-loop dynamics (\ref{eq:PF:ClosedSystem}) becomes
\begin{subequations}\label{eq:SF:newCorDynf}
    \begin{equation}\label{eq:SF:newCorDynf1}
      \dot{\xi}_s= \hat{f}_s(\xi_s),
    \end{equation}
    \begin{equation}\label{eq:SF:newCorDynf2}
        \dot{\xi}_c= \hat{f}_c(\xi_c,\xi_s).
    \end{equation}
\end{subequations}
The expression of $\hat{f}_s(\cdot)$ in (\ref{eq:SF:newCorDynf1}) can be derived as (\ref{eq:SF:newCorDynf1Detial}), where the matrix function $h:\mathbb{R}^{6} \rightarrow \mathbb{R}^{6\times 6}$, $\widehat{F}(\xi_s)=[F(\xi_1),F(\xi_2),F(\xi_3),F(\xi_4),F(\xi_5),F(\xi_6)]^T$, $\xi_p$ is the $p$-th element of $\xi$ and $F(\cdot)$ is the composition of function $f(\cdot)$  with the inverse cosine function $arccos(\cdot)$, i.e. $F=f\circ arccos:[-1,1]\rightarrow \mathbb{R}$.
\begin{figure*}[!t]
\normalsize
\begin{align}\label{eq:SF:newCorDynf1Detial}
      \hat{f}_s(\xi_s)=h(\xi_s)\widehat{F}(\xi_s)=
       \begin{bmatrix}
          2\xi_1^2-2 \;&\; \xi_1\xi_2-\xi_4 \;&\; \xi_1\xi_3-\xi_5 \;&\; \xi_1\xi_4-\xi_2 \;&\; \xi_1\xi_5-\xi_3 \;&\; 0 \\
          \xi_1\xi_2-\xi_4 & 2\xi_2^2-2 & \xi_2\xi_3-\xi_6 & \xi_2\xi_4-\xi_1 & 0 & \xi_2\xi_6-\xi_3 \\
          \xi_1\xi_3-\xi_5 & \xi_2\xi_3-\xi_6 & 2\xi_3^2-2 & 0 & \xi_3\xi_5-\xi_1 & \xi_3\xi_6-\xi_2 \\
          \xi_1\xi_4-\xi_2 & \xi_2\xi_4-\xi_1 & 0 & 2\xi_4^2-2 & \xi_4\xi_5-\xi_6 & \xi_4\xi_6-\xi_5 \\
          \xi_1\xi_5-\xi_3& 0 & \xi_3\xi_5-\xi_1 & \xi_4\xi_5-\xi_6 & 2\xi_5^2-2 & \xi_5\xi_6-\xi_4 \\
          0 & \xi_2\xi_6-\xi_3 & \xi_3\xi_6-\xi_2 & \xi_4\xi_6-\xi_5 & \xi_5\xi_6-\xi_4 & 2\xi_6^2-2 \\
        \end{bmatrix}
        \begin{bmatrix}
            F(\xi_1) \\
            F(\xi_2) \\
            F(\xi_3) \\
            F(\xi_4) \\
            F(\xi_5) \\
            F(\xi_6)
          \end{bmatrix}.
     \end{align}
\hrulefill
\vspace*{4pt}
\end{figure*}

Although the analytic expression for $\hat{f}_c$ is cumbersome to work out, this coordinate transformation have the following advantages. Firstly, we notice that through this transformation ,  in the closed-loop system (\ref{eq:SF:newCorDynf1})-(\ref{eq:SF:newCorDynf2}) the dynamics of $\xi_s$ only depend on $\xi_s$ itself, namely the system achieves a triangular form. Another advantage is stated in the following remark.
\begin{remark}\label{Rmk:SF:maniToPoint}
  After the coordinates transformation (\ref{eq:SF:newCoorTrans}), the set of desired formations---regular tetrahedron--- is assembled into one equilibrium of the subsystem  (\ref{eq:SF:newCorDynf1}).
\end{remark}

This is because we can verify that for any $\mathbf{\Gamma}^*\in \Omega_T$, after the transformation $\xi^*=\Phi(\mathbf{\Gamma}^*)=[\xi_s^{*T},\xi_c^{*T}]^T$, the element $\xi_s^*$ satisfies
\begin{gather}\label{eq:SF:equiNewCoor}
\xi_s^*=[-\frac{1}{3},-\frac{1}{3},-\frac{1}{3},-\frac{1}{3},-\frac{1}{3},-\frac{1}{3}]^T.
\end{gather}
Furthermore, $\hat{f}_s(\xi_s^*)=0$. Then we can rewrite $\Omega_T$ in the new coordinate systems as
\begin{gather}\label{eq:SF:OmegaTNewCoor}
\Omega_T=\{(\xi_s^T,\xi_c^T)^T:\xi_s= \xi_s^* \}.
\end{gather}
Now we can obtain the analytic description of the center manifold $M_c(\mathbf{\Gamma})$.

\begin{theorem}\label{Thm:SF:CMDetile}
Under Graph \ref{As:SF:completegraph} and Assumption \ref{As:SF:f}, the following holds
\begin{enumerate}
  \item The manifold $\Omega_T$ is the center manifold of the closed-loop system (\ref{eq:PF:ClosedSystem}) at any formation $\mathbf{\Gamma}^*\in \Omega_T$.
  \item The regular tetrahedron formation $\Omega_T$ is locally exponentially stable.
\end{enumerate}
\end{theorem}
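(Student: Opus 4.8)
The plan is to exploit the triangular form (\ref{eq:SF:newCorDynf1})--(\ref{eq:SF:newCorDynf2}) together with the spectrum (\ref{eq:SF:proofEigen}) already computed at $\mathbf{\Gamma}^*$. Claim (1) will be established by showing that $\Omega_T$, which is a $3$-dimensional invariant manifold consisting \emph{entirely} of equilibria, is tangent at $\mathbf{\Gamma}^*$ to the center eigenspace of the linearization; a smooth invariant manifold of the correct dimension that is tangent to the center subspace is a center manifold, and since by \emph{Lemma}~\ref{Thm:SF:ExistCM} the center manifold is $3$-dimensional, the two must coincide locally. Claim (2) will then follow because on this center manifold the flow is trivial (every point is an equilibrium), while the remaining five modes are governed by the strictly negative eigenvalues $\lambda_{1},\dots,\lambda_{5}$ under Assumption~\ref{As:SF:f}, yielding exponential attraction transverse to $\Omega_T$.

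For claim (1) I would first record that $\Omega_T$ is a $3$-dimensional submanifold: in the coordinates (\ref{eq:SF:newCoorTrans}) it is the slice $\{\xi_s=\xi_s^*\}$ of (\ref{eq:SF:OmegaTNewCoor}), parameterized by the three free components of $\xi_c$. By \emph{Remark}~\ref{Rmk:SF:OmegaTinEqm} the vector field of (\ref{eq:PF:ClosedSystem}) vanishes identically on $\Omega_T$, so differentiating it along any curve lying in $\Omega_T$ through $\mathbf{\Gamma}^*$ gives $T_{\mathbf{\Gamma}^*}\Omega_T \subseteq \ker A_\zeta(\mathbf{\Gamma}^*)$. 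Since the construction preceding (\ref{eq:SF:proofEigen}) makes $A_\zeta(\mathbf{\Gamma}^*)$ diagonalizable (it is simultaneously diagonalizable by \emph{Lemma}~\ref{Thm:pre:simuDiagonalize}) and its zero eigenvalue $\lambda_{6,7,8}=0$ has algebraic multiplicity exactly three, the kernel is also $3$-dimensional. A dimension count then forces $T_{\mathbf{\Gamma}^*}\Omega_T=\ker A_\zeta(\mathbf{\Gamma}^*)$, i.e. $\Omega_T$ is tangent to the center eigenspace, identifying it with $M_c(\mathbf{\Gamma}^*)$; the special point (\ref{eq:SF:proofPoint}) suffices by the rotational invariance of \emph{Lemma}~\ref{Thm:PF:rotationalInvariant}, which transports the conclusion to any $\mathbf{\Gamma}^*\in\Omega_T$.

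For claim (2) I would work directly with the triangular system. Because the $\xi_s$-dynamics (\ref{eq:SF:newCorDynf1}) is autonomous (it does not see $\xi_c$), and because the three zero eigenvalues are contributed entirely by the $\xi_c$-block, where $\hat f_c(\xi_c,\xi_s^*)\equiv 0$ forces a vanishing $\xi_c$-Jacobian on $\Omega_T$, the linearization of $\hat f_s$ at $\xi_s^*$, restricted to the invariant surface cut out by the scalar identity among the $\cos\theta_{ij}$ preceding (\ref{eq:SF:newCorDynf}), carries exactly the five nonzero eigenvalues $\lambda_{1,2}=2\sqrt2\,\dot f(\theta^*)$ and $\lambda_{3,4,5}=\tfrac{4\sqrt2}{3}\dot f(\theta^*)-\tfrac{8}{3}f(\theta^*)$. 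Under Assumption~\ref{As:SF:f} these are strictly negative, so $\xi_s^*$ is a locally exponentially stable equilibrium of (\ref{eq:SF:newCorDynf1}) and $|\xi_s(t)-\xi_s^*|$ decays exponentially. As the distance of a trajectory to $\Omega_T=\{\xi_s=\xi_s^*\}$ is controlled by $|\xi_s-\xi_s^*|$, the set $\Omega_T$ is locally exponentially stable, each trajectory converging exponentially to the nearby point of $\Omega_T$ whose $\xi_c$ is fixed by the vanishing of $\hat f_c$ as $\xi_s\to\xi_s^*$.

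The main obstacle is the bookkeeping at the interface of the two claims: one must be certain that the five nonzero modes of the full $8$-dimensional linearization are precisely the transverse ($\xi_s$) modes and the three zero modes precisely the tangential ($\xi_c$) modes, since this is what licenses reading off exponential contraction from (\ref{eq:SF:proofEigen}). This rests on the block-triangular structure of the Jacobian in the $\xi$ coordinates (with a vanishing $\xi_c$-block on $\Omega_T$) and on the flow-invariance of the constraint surface of the identity, which removes the spurious sixth $\xi_s$-direction so that the count $5+3=8$ is consistent. Once this correspondence is secured, both the center-manifold identification and the exponential stability follow from the already-computed spectrum.
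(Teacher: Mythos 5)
Your proposal is correct, but it reaches the result by a genuinely different route than the paper. The paper's proof works entirely in the $\xi$ coordinates: it linearizes the subsystem (\ref{eq:SF:newCorDynf1}) at $\xi_s^*$ from scratch, separates the Jacobian as $A_\xi^1(\xi_0^*)F(\xi_0^*)+A_\xi^2(\xi_0^*)\dot F(\xi_0^*)$ with $A_\xi^1,A_\xi^2$ built from $\Lambda_n$ and $I$, and invokes \emph{Lemma}~\ref{Thm:PR:lambdaMatrix} to obtain six eigenvalues $\{-8F,\,-\tfrac{8}{3}\dot F\,(\times 2),\,-\tfrac{8}{3}F-\tfrac{16}{9}\dot F\,(\times 3)\}$, \emph{all} strictly negative under Assumption~\ref{As:SF:f}; the center manifold is then read off from the triangular structure and \emph{Lemma}~\ref{Thm:SF:ExistCM}. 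You instead recycle the spectrum (\ref{eq:SF:proofEigen}) already computed in the $\zeta$ (RPY) coordinates, identify $T_{\mathbf{\Gamma}^*}\Omega_T$ with $\ker A_\zeta(\mathbf{\Gamma}^*)$ by differentiating the vanishing vector field along $\Omega_T$ and counting dimensions (diagonalizability of $A_\zeta$ giving a three-dimensional kernel), and transfer the five nonzero eigenvalues to the $\xi_s$-block through the block-triangular Jacobian with vanishing $\xi_c$-block. What your approach buys: it avoids recomputing the $6\times6$ Jacobian and the persymmetric-matrix machinery, and it makes explicit the $5+3=8$ bookkeeping that the paper leaves implicit (your observation that the sixth $\xi_s$-eigenvalue is spurious, living transverse to the identity surface, is exactly right: in the paper's computation it is $-8F(\xi_0^*)=-8f(\theta^*)$, while the other five convert, via $\dot F(-\tfrac13)=-\tfrac{3\sqrt2}{4}\dot f(\theta^*)$, precisely to $\lambda_{1,2}$ and $\lambda_{3,4,5}$). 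What the paper's computation buys: since all six ambient $\xi_s$-eigenvalues come out negative, exponential stability of $\xi_s^*$ holds in the full six-dimensional $\xi_s$-space, so no invariance argument for the constraint surface is needed at all — your route genuinely depends on that invariance, whereas the paper's does not. Two small points to tighten: your step ``the center manifold is $3$-dimensional, so the two must coincide locally'' is not forced by dimension alone (center manifolds are non-unique); here coincidence follows because every center manifold must contain all equilibria in a neighborhood of $\mathbf{\Gamma}^*$, and $\Omega_T$ is locally a $3$-parameter family of equilibria — add that sentence. Conversely, your reduction-style argument for claim (2) (trivial flow on the center manifold plus strictly stable transverse spectrum, with $|\xi_s-\xi_s^*|$ controlling the distance to $\Omega_T$) is actually more careful than the paper's closing remark that ``any center manifold is locally exponentially stable,'' which is false as a general statement.
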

\begin{proof}
Through coordinates transformation (\ref{eq:SF:newCoorTrans}), we can investigate the stability of  (\ref{eq:SF:newCorDynf}) instead of that of original system (\ref{eq:PF:ClosedSystem}). Since (\ref{eq:SF:newCorDynf}) shares a triangle form, the stability of subsystem (\ref{eq:SF:newCorDynf1}) can be determined independently. Let $\overline{\xi}_s=\xi_s-\xi_s^*$, and the linearization of this subsystem about $\xi_s^*$ is
\begin{equation}\nonumber
  \begin{split}
    \dot{\overline{\xi}}_s\!=\!\!\left.\left[ \frac{\partial \!\hat{f}_s(\xi_s\!)}{\partial\xi_1},\!\!\frac{\partial \!\hat{f}_s(\xi_s\!)}{\partial\xi_2},\!\!\frac{\partial \!\hat{f}_s(\xi_s\!)}{\partial\xi_3},\!\!\frac{\partial \!\hat{f}_s(\xi_s\!)}{\partial\xi_4},\!\!\frac{\partial \!\hat{f}_s(\xi_s\!)}{\partial\xi_5},\!\!\frac{\partial \!\hat{f}_s(\xi_s\!)}{\partial\xi_6} \right]\right|_{\xi_s^*}\!\!\!\!\cdot \!\overline{\xi}_s\\
   \end{split}.
\end{equation}
The elements of the above matrix satisfy
\begin{equation}\label{eq:SF:proofCM}
  \begin{split}
  \frac{\partial \!\hat{f}_s(\xi_s\!)}{\partial\xi_i}&=\frac{\partial }{\partial\xi_i} \left[h(\xi_s)\widehat{F}(\xi_s)\right] \\
    &=\frac{\partial h(\xi_s\!)}{\partial\xi_i} \widehat{F}(\xi_s)+\dot{F}(\xi_i)h(\xi_s)\mathbf{e}_i .
\end{split}
\end{equation}
where $\mathbf{e}_i$ is the $i$-th column of the identity matrix $\mathbf{I}_{6\times6}$.

Since there exist $F(\cdot),\dot{F}(\cdot)$ in (\ref{eq:SF:proofCM}), the spectrum of the linearized subsystem cannot be obtained directly. However for the formation in $\Omega_T$, all the elements of $\xi_s^*$ are identical, which allows us to separate $F(\cdot),\dot{F}(\cdot)$ as
\begin{gather}\label{eq:SF:proofLinear6}
   \dot{\overline{\xi}}_s=\left(A_\xi^1(\xi_0^*)\cdot F(\xi_0^*)+A_\xi^2(\xi_0^*)\cdot \dot{F}(\xi_0^*)\right)\overline{\xi}_s,
\end{gather}
where $A_\xi^1(\xi_0^*), A_\xi^2(\xi_0^*) \in \mathbb{R}^{6 \times 6}$, $\xi_0^*$ is the element of $\xi_s^*$, equal to $-\frac{1}{3}$.

Furthermore, since the inter-agent graph is completed, $A_\xi^1(\xi_0^*), A_\xi^2(\xi_0^*)$ in (\ref{eq:SF:proofLinear6}) will share the same structure, whose elements except those on diagonals or counterdiagonals are identical and elements on counterdiagonals are 0. Actually this can be also observed in the dynamics of $\xi_s$ (\ref{eq:SF:newCorDynf1Detial}). Hence the matrices $A_\xi^1(\xi_0^*), A_\xi^2(\xi_0^*)$ can be written by the combination of $\Lambda_n$ and $I$:
\[
\begin{split}
    A_\xi^1(\xi_0^*)&=(\xi_0^*-1)*\Lambda_n+(7\xi_0^*+1)I, \\
    A_\xi^2(\xi_0^*)&=(\xi_0^{*2}-\xi_0^*)*\Lambda_n+(\xi_0^{*2}+\xi_0^*-2)I,
  \end{split}\]
where $I$ is the identity matrix, $\Lambda_n$ is the matrix defined as that in  \emph{Lemma} \ref{Thm:PR:lambdaMatrix}, and $n=6$. So the system (\ref{eq:SF:proofLinear6}) can be rewritten as
\begin{equation}
\begin{split}
 \dot{\overline{\xi}}_s&=\Big[(\xi_0^*\!-\!1)\left(F(\xi_0^*)\!+\!\xi_0^*\dot{F}(\xi_0^*)\right)\Lambda_n  \\
        & \;+\!\left((7\xi_0^*\!+\!1)F(\xi_0^*)\!+\!(\xi_0^{*2}\!+\!\xi_0^*\!-\!2 )\dot{F}(\xi_0^*)  \right)I \Big] \overline{\xi}_s,
\end{split}
\end{equation}

By \emph{Lemma} \ref{Thm:PR:lambdaMatrix}, and substituting $\xi_0^*=-\frac{1}{3}$, the spectrum of system (\ref{eq:SF:proofLinear6}) is
\[
\begin{split}
    \left\{ -8F(\xi_0^*),\,  -\frac{8}{3}\dot{F}(\xi_0^*),\, -\frac{8}{3}\dot{F}(\xi_0^*),\, -\frac{8}{3}F(\xi_0^*)\!-\!\frac{16}{9}\dot{F}(\xi_0^*),\right.\\
  \left. -\frac{8}{3}F(\xi_0^*)\!-\!\frac{16}{9}\dot{F}(\xi_0^*),\, -\frac{8}{3}F(\xi_0^*)\!-\!\frac{16}{9}\dot{F}(\xi_0^*) \right\}.
  \end{split}
\]
If $f(\cdot)> 0, \; \dot{f}(\cdot)< 0 $, then $F(\cdot)> 0, \; \dot{F}(\cdot)> 0 $. So for a formation $\mathbf{\Gamma}^*\in \Omega_T$, all the eigenvalues of subsystem (\ref{eq:SF:newCorDynf1}) is negative near $\mathbf{\Gamma}^*$.

By \emph{Lemma} \ref{Thm:SF:ExistCM}, there exist a three-dimensional center manifold in (\ref{eq:SF:newCorDynf}) near the formation $\mathbf{\Gamma}^*\in \Omega_T$. We have already shown that the eigenvectors of (\ref{eq:SF:newCorDynf1}) generates a stable manifold, so the eigenvectors of the rest 3-dimensional subsystem (\ref{eq:SF:newCorDynf2}) will generate the center manifold $M_c(\mathbf{\Gamma}^*)$.

Since the dynamics of $\xi_s$ is independent on $\xi_c$, the center manifold can be determined as
$$M_c(\mathbf{\Gamma}^*)=\left\{ \mathbf{\Gamma} \in (\mathcal{S}^2)^4 : \xi_s=\xi^*_s, [\xi_s^{T},\xi_c^{T}]^T=\Phi(\mathbf{\Gamma}) \right\}.$$
and it is $\Omega_T$ exactly. The second assertion will be obvious, as any center manifold is locally exponentially stable.
\end{proof}

By Remark \ref{Rmk:SF:OmegaTinEqm}, any $\mathbf{\Gamma} \in \Omega_T$ is an equilibrium of system (\ref{eq:SF:newCorDynf}), i.e., the rotational angular velocities of the formation $\mathbf{\Gamma} $ will be zero. Hence this stable regular tetrahedron formation will be stationary.

\subsection{Almost Global Stability of Regular Tetrahedron Formations}

Since $(\mathcal{S}^2)^n$ is a compact space without boundary, under continuous time-invariant feedback control, the global asymptotic stability of the desired formation cannot be achieved. Thus we will devote this subsection to the almost globally asymptotical stability of regular tetrahedron formations.

To this end, we need to prove that except the desired formations $\Omega_T$, other equilibria in set $\Omega$ are all unstable. We need to make a further restriction on the control gain.

\begin{assumption}\label{As:SFG:f}
  The gain function $f(\cdot)$ in (\ref{eq:PF:Control}) has the structure $f(\theta_{ij})=a\cos(\theta_{ij})+a$, where $a \in \mathbb{R}$ and $a>0$.
\end{assumption}

We note that the form of function $f(\cdot)$ in Assumption \ref{As:SFG:f} also fulfills Assumption \ref{As:SF:f}. The next lemma provides an important inequality which will be exploited in the rest of this section.

\begin{lemma}\label{Thm:SFG:Inequality}
For any four angles $\theta_i \in [0,\pi]$, $i \in \mathcal{V}= \{1,2,3,4\}$, if $\theta_1+\theta_2+\theta_3+\theta_4=2\pi$, then we have
$$ \sum\limits_{i\in \mathcal{V}}{\big[\cos^2(\theta_i)+\cos(\theta_i)}\big] \ge 0 ,$$
in which the equality holds if and only if $\theta_i = \pi/2, \forall i \in \mathcal{V}$.
\end{lemma}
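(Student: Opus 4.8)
The plan is to decouple the four angles by a supporting-line (tangent-line) argument, reducing the four-variable constrained inequality to a single-variable unconstrained one. The guiding observation is that equality is conjectured at the symmetric point $\theta_i=\pi/2$, where each summand $g(\theta):=\cos^2\theta+\cos\theta$ vanishes, since $g(\pi/2)=0$. Computing $g'(\theta)=-\sin2\theta-\sin\theta$, so that $g'(\pi/2)=-\sin\pi-\sin(\pi/2)=-1$, the tangent line of $g$ at $\pi/2$ is $\ell(\theta)=\tfrac{\pi}{2}-\theta$. The crucial coincidence is that this affine lower bound interacts perfectly with the constraint: $\sum_{i}\ell(\theta_i)=\sum_i(\tfrac{\pi}{2}-\theta_i)=2\pi-\sum_i\theta_i=0$.

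First I would establish the scalar inequality
\begin{equation}\nonumber
  g(\theta)=\cos^2\theta+\cos\theta\ \ge\ \frac{\pi}{2}-\theta,\qquad \theta\in[0,\pi],
\end{equation}
with equality if and only if $\theta=\pi/2$. Granting this, summing over $i\in\mathcal{V}$ and using $\theta_1+\cdots+\theta_4=2\pi$ gives $\sum_i[\cos^2(\theta_i)+\cos(\theta_i)]\ge\sum_i(\tfrac{\pi}{2}-\theta_i)=0$, and equality in the sum forces equality in each term, i.e. $\theta_i=\pi/2$ for all $i$, which is feasible since these angles sum to $2\pi$. This completes the argument.

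To prove the scalar inequality I would study $p(\theta):=\cos^2\theta+\cos\theta+\theta-\tfrac{\pi}{2}$ on $[0,\pi]$. One checks $p(\pi/2)=0$ and $p'(\pi/2)=0$, with $p''(\pi/2)=-2\cos\pi-\cos(\pi/2)=2>0$, so $\pi/2$ is a strict local minimum of value $0$. The derivative is $p'(\theta)=1-\sin\theta-\sin2\theta=1-\sin\theta(1+2\cos\theta)$. On $[\pi/2,\pi]$ I would show $p'>0$ directly: when $\cos\theta\le-\tfrac12$ the factor $1+2\cos\theta\le0$ so $p'\ge1$, while when $\cos\theta\in(-\tfrac12,0)$ both $\sin\theta\in(0,1)$ and $1+2\cos\theta\in(0,1)$, whence $\sin\theta(1+2\cos\theta)<1$; thus $p$ is strictly increasing on $[\pi/2,\pi]$ and $p(\theta)>p(\pi/2)=0$ there. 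On $[0,\pi/2]$ the derivative changes sign, so I would instead control the critical points: since $p''(\theta)=-\cos\theta-2\cos2\theta$ reduces to $4\cos^2\theta+\cos\theta-2=0$ and hence vanishes exactly once on $(0,\pi/2)$, with $p''(0)=-3<0$ and $p''(\pi/2)=2>0$, the function $p'$ is first decreasing then increasing there; together with $p'(0)=1>0$ and $p'(\pi/2)=0$ this forces a single sign change of $p'$ (from $+$ to $-$) at some $\theta_1\in(0,\pi/2)$. Hence $p$ has a unique interior critical point on $(0,\pi/2)$, a local maximum, so its minimum over $[0,\pi/2]$ is attained at an endpoint; comparing $p(0)=2-\tfrac{\pi}{2}>0$ with $p(\pi/2)=0$ gives $p\ge0$ there, with equality only at $\pi/2$.

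The main obstacle is precisely this last one-variable estimate on $[0,\pi/2]$, where $p'$ is not sign-definite and the naive bound $\cos^2\theta+\cos\theta\ge\cos\theta$ is too weak. The clean route is the unimodality of $p'$ extracted from the single sign change of $p''$, which identifies the only interior extremum of $p$ as a maximum and thereby collapses the question to the two endpoint values. Everything else—the tangent-line construction and the final summation against the constraint—is routine once the scalar inequality is secured.
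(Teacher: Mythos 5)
Your proof is correct, and it takes a genuinely different route from the paper's. The paper treats the statement as a constrained minimization problem: it introduces Lagrange multipliers for the box constraints $\theta_i\in[0,\pi]$ and the affine constraint $\sum_i\theta_i=2\pi$, writes down the KKT conditions, asserts that (up to permutation) the stationary solutions are exactly $(0,2\pi/3,2\pi/3,2\pi/3)$, $(0,0,\pi,\pi)$ and $(\pi/2,\pi/2,\pi/2,\pi/2)$, and concludes by comparing objective values, the minimum $0$ being attained at the symmetric point. You instead decouple the four variables by the tangent-line (supporting-line) trick: the pointwise bound $\cos^2\theta+\cos\theta\ge\pi/2-\theta$ on $[0,\pi]$, with equality only at $\theta=\pi/2$, sums against the constraint to exactly $0$, and your one-variable verification is sound --- the case split at $\cos\theta=-1/2$ handles $[\pi/2,\pi]$ cleanly, and on $[0,\pi/2]$ the single sign change of $p''$ (the quadratic $4\cos^2\theta+\cos\theta-2=0$ has exactly one root with $\cos\theta\in(0,1)$, namely $\cos\theta=(\sqrt{33}-1)/8$) correctly yields unimodality of $p'$, hence a unique interior critical point of $p$ which is a maximum, collapsing the estimate to the endpoint comparison $p(0)=2-\pi/2>0=p(\pi/2)$. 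What your route buys: it is fully self-contained and delivers the equality case for free, whereas the paper's argument rests on the unproved claim that its KKT solution list is exhaustive --- completely solving that trigonometric stationarity system (e.g., ruling out stationary points with distinct interior angles sharing a common value of $-\sin 2\theta-\sin\theta$) is nontrivial and is only sketched there. What the paper's route buys is brevity and a mechanical recipe that would extend to other agent counts without hunting for a supporting line; your approach hinges on the pleasant coincidence that the tangent line at the conjectured minimizer sums to zero under the affine constraint, which is precisely what makes the decoupling work here.
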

This lemma is obtained by minimizing the left-hand scalar function under the above constraints. A detailed proof can be found in the appendix.

\begin{lemma}\label{Thm:SFG:CoplaneUnStable}
Under Graph \ref{As:SF:completegraph} and Assumption \ref{As:SFG:f}, any equilibrium in $\Omega_C$ of the closed-loop system (\ref{eq:PF:ClosedSystem}) is unstable,  where
\begin{equation*}
\begin{split}
  \Omega_C = \Big\{ \mathbf{\Gamma} \in (\mathcal{S}^2)^4 : \Gamma_i= exp(\theta_i \widehat{u})\Gamma_1, \; \Gamma_1 \in \mathcal{S}^2, u\in \mathcal{S}^2,  \\
 \theta_i \in [-\pi,\pi), i \in \{2,3,4\}  \Big\},
\end{split}
\end{equation*}
 i.e. any formation consisting of four coplanar reduced attitudes is unstable.
\end{lemma}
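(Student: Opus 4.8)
The plan is to exploit the special form of the gain in Assumption \ref{As:SFG:f}: under it the closed loop (\ref{eq:PF:ClosedSystem}) is exactly the negative gradient flow of the Lyapunov function $V$ of Theorem \ref{Thm:SF:stableEquiSet}, so stability can be read off from the Hessian of $V$. With $f(\theta)=a(1+\cos\theta)$ the integrand integrates in closed form and $V(\mathbf{\Gamma})=a\sum_{(i,j)\in\mathcal{E}}\bigl(\tfrac12+\cos\theta_{ij}+\tfrac12\cos^{2}\theta_{ij}\bigr)$ with $\cos\theta_{ij}=\Gamma_i^T\Gamma_j$. Differentiating gives $\partial V/\partial\Gamma_i=\sum_{j\in\mathcal{N}_i}f(\theta_{ij})\Gamma_j$, and since $\widehat{\Gamma}_i\widehat{\Gamma}_i\Gamma_j=-\Pi_{\Gamma_i}\Gamma_j$, where $\Pi_{\Gamma_i}$ is the tangential projection onto $T_{\Gamma_i}\mathcal{S}^2$, the right-hand side of (\ref{eq:PF:ClosedSystem}) equals $-\mathrm{grad}\,V$ in the round product metric. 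Hence the Jacobian at any equilibrium is $-\mathrm{Hess}\,V$, a symmetric operator, and by Lyapunov's indirect method it suffices to exhibit, at each equilibrium $\mathbf{\Gamma}\in\Omega_C$, one tangent direction $\zeta$ with $\zeta^{T}(\mathrm{Hess}\,V)\zeta<0$.

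Next I would fix the test direction. Using Lemma \ref{Thm:PF:rotationalInvariant} I rotate $\mathbf{\Gamma}$ so that the common axis $u$ is the north pole; the four attitudes then share a single elevation and are distinguished only by their azimuths, whose successive gaps $\Delta_1,\dots,\Delta_4$ around the circle satisfy $\Delta_1+\Delta_2+\Delta_3+\Delta_4=2\pi$. For $\zeta$ I take the out-of-plane (meridional) perturbation that lifts consecutive agents with alternating signs, i.e.\ components $(-1)^{k}$ along the meridional tangent at the $k$-th agent in cyclic order. When the common circle is a great circle, reflection across its plane is an orthogonal map and leaves (\ref{eq:PF:ClosedSystem}) invariant, so $\mathrm{Hess}\,V$ block-diagonalizes into an in-plane and an out-of-plane part; because the pairwise angles $\theta_{ij}$ are then frozen to first order, $\zeta^{T}(\mathrm{Hess}\,V)\zeta$ collapses to the weighted graph-Laplacian form of the linearized $\phi$-dynamics (\ref{eq:PF:ClosedSysRPY2}) and its sign is settled directly by the positivity of the gains $f(\theta_{ij})$, with no further input.

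The substantive case is the small circle, where reflection is unavailable and every $\theta_{ij}$ moves at first order, so that $\dot f(\theta)=-a\sin\theta$ enters the second variation with a sign that is not a priori controlled. Here I would expand $\zeta^{T}(\mathrm{Hess}\,V)\zeta$ with the spherical cosine law (Lemma \ref{Thm:PR:cosine}) together with the equilibrium relations $\sum_{j}f(\theta_{ij})\widehat{\Gamma}_i\Gamma_j=0$, and organize the outcome so that the consecutive-gap cosines assemble into $\sum_{k}\bigl(\cos^{2}\Delta_k+\cos\Delta_k\bigr)$. Since $\sum_k\Delta_k=2\pi$, Lemma \ref{Thm:SFG:Inequality} bounds this quantity below by zero, and the overall sign then forces $\zeta^{T}(\mathrm{Hess}\,V)\zeta<0$ everywhere except at the single equality configuration $\Delta_k\equiv\pi/2$, which is a great circle and is already covered by the previous paragraph.

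I expect this reduction---turning an eight-variable Hessian at an arbitrary small-circle equilibrium into the constrained four-angle quantity of Lemma \ref{Thm:SFG:Inequality}---to be the main obstacle, the bookkeeping being aggravated by the first-order motion of all six $\theta_{ij}$ and by the need to treat coincident and antipodal agents, where $k_{ij}$ is undefined, as separate degenerate subcases. Once the strictly negative direction is secured, $-\mathrm{Hess}\,V$ has a positive eigenvalue, so $\mathbf{\Gamma}$ is unstable; as $\mathbf{\Gamma}\in\Omega_C$ was arbitrary and (\ref{eq:PF:ClosedSystem}) is rotation invariant by Lemma \ref{Thm:PF:rotationalInvariant}, every coplanar equilibrium is unstable.
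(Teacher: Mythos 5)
Your gradient-flow framing is sound (with $f(\theta)=a(1+\cos\theta)$ the closed loop is indeed $-\mathrm{grad}\,V$, and your reflection-symmetry splitting of $\mathrm{Hess}\,V$ into in-plane and out-of-plane blocks at a great-circle equilibrium reproduces exactly the decoupled linearization (\ref{eq:SFG:ClosedSysRPY1})--(\ref{eq:SFG:ClosedSysRPY2}) of the paper), but you have inverted where the difficulty lies, and at the point of inversion your argument has a genuine hole. The out-of-plane block at a great-circle equilibrium is \emph{not} a weighted graph Laplacian: from (\ref{eq:SFG:ClosedSysRPY2}) it is $\dot{\phi}_i=\sum_{j}f(\theta^*_{ij})\bigl[\cos(\theta^*_{ij})\phi_i-\phi_j\bigr]$, whose diagonal carries the factors $\cos(\theta^*_{ij})$, which are negative for obtuse gaps. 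Positivity of the gains therefore settles nothing ``with no further input''; this great-circle case is precisely where the paper must choose the test direction $v=[1,0,-1,0]^T$ and verify the identity $v^TA_{\psi}(\mathbf{\Gamma}^*)v=\sum_{i}a\bigl[\cos^2(\theta_{i,i+1})+\cos(\theta_{i,i+1})\bigr]+2a\bigl(1+\cos(\theta_{13})\bigr)^2$, so that Lemma \ref{Thm:SFG:Inequality} (consecutive geodesic gaps summing to $2\pi$) applies. Moreover the equality configuration is not ``already covered by the previous paragraph'': at the cross formation $\Omega_{\overline{C}}$ one has $f(\theta_{13})=f(\pi)=0$, the quadratic form along the test direction vanishes, and the Laplacian-style reasoning you invoke fails outright; the paper closes this case only by computing the spectrum of $A_{\psi}$ explicitly, obtaining $\{\pm 2a,0,0\}$ and hence a positive eigenvalue. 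Your proposal, as written, proves nothing at exactly these equilibria.

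Conversely, the ``substantive'' small-circle case on which you plan to spend the main effort is essentially empty, and your plan for it would not work anyway. If the four attitudes lie on a circle of nonzero latitude $\phi$ about $u$, the equilibrium condition $\sum_j f(\theta_{ij})\Gamma_j=l_i\Gamma_i$ together with $f(\theta)=a(1+\cos\theta)>0$ for $\theta<\pi$ (and the fact that two antipodal points cannot share a small circle) forces all four projections onto the plane orthogonal to $u$ to coincide, i.e.\ consensus --- which can be rotated onto the equator and handled with the great-circle computation. This is why the paper may assume $\phi^*_i=0$ after the rotation of Lemma \ref{Thm:PF:rotationalInvariant}. And even if nontrivial small-circle equilibria existed, your assembly of the Hessian into $\sum_k\bigl(\cos^2\Delta_k+\cos\Delta_k\bigr)$ conflates azimuthal gaps with geodesic distances: on a small circle $\cos\theta_{ij}=\cos^2\!\phi\,\cos(\Delta\psi_{ij})+\sin^2\!\phi\neq\cos(\Delta\psi_{ij})$, so the azimuthal gaps that sum to $2\pi$ are not the angles to which Lemma \ref{Thm:SFG:Inequality} would apply. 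To repair the proof, discharge the small-circle case by the consensus argument above, and put the real work --- the test-vector identity, Lemma \ref{Thm:SFG:Inequality}, and the separate spectral computation at $\Omega_{\overline{C}}$ --- into the great-circle case, as the paper does.
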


\begin{proof}
  Denote $e=[0,0,1]^T \in  \mathcal{S}^2$ as the north pole. For any equilibrium $\mathbf{\Gamma}^* \in \Omega_C$, if  $u$ corresponding to  $\mathbf{\Gamma}^*$ is not equal to $\pm e$ , let $\alpha = \arccos(u^Te)$ and $v=\widehat{u}e / \sin(\alpha)$. Through the coordinate transformation $ \overline{\Gamma}_i= exp(\alpha \widehat{v})\Gamma_i, i \in \mathcal{V}$, $\mathbf{\Gamma}^*$ can be transformed to an equilibrium lying on the equator. By Lemma \ref{Thm:PF:rotationalInvariant}, $\mathbf{\Gamma}^*$ and $\overline{\mathbf{\Gamma}}^*$ share the same stability. Hence, without loss of generality, we suppose $u=e$. Thus we have $\phi_i^*=0$ and $\cos(\theta_{ij}^*)=\cos(\psi_i^*-\psi_j^*)$, $i,j \in \mathcal{V}$.

Let $\overline{\psi}_i=\psi_i-\psi_i^*$, and linearize the system (\ref{eq:PF:ClosedSysRPY1})-(\ref{eq:PF:ClosedSysRPY2}) around $\{(\psi_i^*,\phi_i^*)\}_{i \in \mathcal{V}}$. We get

\begin{subequations}
    \begin{equation}\label{eq:SFG:ClosedSysRPY1}
      \dot{\overline{\psi}}_i=\sum\limits_{j\in \mathcal{N}_i}{\left[f(\theta_{ij}^*)\cos(\theta_{ij}^*)+\dot{f}(\theta_{ij}^*)\sin(\theta_{ij}^*)\right] (\overline{\psi}_i-\overline{\psi}_j)} ,
    \end{equation}
    \begin{equation}\label{eq:SFG:ClosedSysRPY2}
        \dot{\phi}_i =\sum\limits_{j\in \mathcal{N}_i}{f(\theta_{ij}^*)\left[ cos(\theta_{ij}^*)\phi_i-\phi_j \right]}.
    \end{equation}
\end{subequations}
The above linearizations are decoupled, and we denote the system matrices of (\ref{eq:SFG:ClosedSysRPY2}) as $A_{\psi}(\mathbf{\Gamma}^*  )  \in \mathbb{R}^{4 \times 4}$. We note that $A_{\psi}(\mathbf{\Gamma}^*  )$ is a symmetric matrix. Now let a vector $v=[1,0,-1,0]^T$, then
\begin{equation}\label{eq:SFG:vAv}
\begin{split}
  v^TA_{\psi}(\mathbf{\Gamma}^*)v= \!\sum\limits_{j=i+1,i\in \mathcal{V}}\! {\left[ a \cos^{2}(\theta_{ij})\!+\!a\cos(\theta_{ij})\right] } \\
   + 2a(\cos(\theta_{13})+1)^2,
\end{split}
\end{equation}
where a modulo $n$ operation is applied for agent identification. Suppose $\theta_{i,i+1} \le \pi$, $\forall i \in \mathcal{V}$, otherwise a contradiction can be obtained easily from the fact that ${\Gamma}^*$ is one equilibria. By Lemma \ref{Thm:SFG:Inequality}, $v^TA_{\psi}(\mathbf{\Gamma}^*)v \ge 0$, and $v^TA_{\psi}(\mathbf{\Gamma}^*)v = 0$ when $\mathbf{\Gamma}^* \in \Omega_{\overline{C}}=\left\{ \mathbf{\Gamma} \in (\mathcal{S}^2)^4 : \Gamma_i^T\Gamma_j \in \{0, -1\}, \forall i \neq j \right\}$, i.e. $\mathbf{\Gamma}^*$ constructs a cross formation.
Hence, for any $\mathbf{\Gamma}^* \in \Omega_C - \Omega_{\overline{C}}$, at least one of eigenvalue of $A_{\psi}(\mathbf{\Gamma}^*) $ is positive.

For any $\mathbf{\Gamma}^* \in \Omega_{\overline{C}}$, without loss of generality, we assume $\psi_i^*=-\pi+i\pi/2$, $i \in \mathcal{V}$. Then the eigenvalues of $A_{\psi}(\mathbf{\Gamma}^*) $ are $\{ \pm2a, 0,0\}$. Combining with the above proof, the assertion follows.
\end{proof}

Next we investigate the components of set $\Omega$, by which we can show $\Omega_T$ is the only stable manifold in $\Omega$.

\begin{lemma}\label{Thm:SF:Equilibria}
Under Graph \ref{As:SF:completegraph} and Assumption \ref{As:SFG:f}, the equilibria set of  closed-loop system (\ref{eq:PF:ClosedSystem}) can be decomposed as
\begin{gather}\label{eq:SF:Equilibria}
  \Omega=\Omega_T \cup \Omega_L,
\end{gather}
where $\Omega_L \subseteq \Omega_C$.
\end{lemma}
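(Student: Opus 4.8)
The plan is to classify $\Omega$ directly from its defining relation (\ref{eq:PF:Omega}), specialized to the gain in Assumption \ref{As:SFG:f}. Since $\mathcal{G}$ is complete and $a>0$, a point $\mathbf{\Gamma}$ lies in $\Omega$ if and only if $\sum_{j\neq i}(1+\Gamma_i^T\Gamma_j)\,\widehat{\Gamma}_i\Gamma_j=0$ for every $i\in\mathcal{V}$. Writing $c_{ij}=\Gamma_i^T\Gamma_j$ and using $\widehat{\Gamma}_i\Gamma_j=\Gamma_i\times\Gamma_j$, each of these four equations reads $\Gamma_i\times\big(\sum_{j\neq i}(1+c_{ij})\Gamma_j\big)=0$, which is equivalent to the existence of scalars $\mu_i$ with $\sum_{j\neq i}(1+c_{ij})\Gamma_j=\mu_i\Gamma_i$. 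First I would split according to the rank of $G=[\Gamma_1,\Gamma_2,\Gamma_3,\Gamma_4]\in\mathbb{R}^{3\times4}$. If $\mathrm{rank}\,G\le 2$ the four reduced attitudes lie in a common plane through the origin, so each $\Gamma_i$ is a planar rotation of $\Gamma_1$ about the unit normal of that plane; such configurations belong to $\Omega_C$ and will constitute $\Omega_L:=\Omega\setminus\Omega_T$. Since $\Omega_T\subseteq\Omega$ by Remark \ref{Rmk:SF:OmegaTinEqm} and a regular tetrahedron spans $\mathbb{R}^3$, it then remains only to show that every equilibrium with $\mathrm{rank}\,G=3$ is a regular tetrahedron.

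So suppose $\mathrm{rank}\,G=3$; then $\ker G$ is one-dimensional, spanned by some $w=(w_1,\dots,w_4)^T$ with $\sum_j w_j\Gamma_j=0$. The four relations above stack as $G\,(M-\widetilde D)=0$, where $M_{ij}=1+c_{ij}$ and $\widetilde D=\mathrm{diag}(\mu_i+2)$, so every column of $M-\widetilde D$ lies in $\mathrm{span}(w)$. Reading off the off-diagonal entries gives $1+c_{ij}=\lambda_i w_j=\lambda_j w_i$ for $i\neq j$, whence $\lambda$ is proportional to $w$ and $c_{ij}=\kappa\,w_iw_j-1$ for a single constant $\kappa$. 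I would first dispose of the degenerate subcase in which some $w_i=0$: then $1+c_{ij}=0$ for all $j\neq i$, forcing the remaining three attitudes to be simultaneously antipodal to $\Gamma_i$ and hence to coincide, which collapses the rank and contradicts $\mathrm{rank}\,G=3$. Thus all $w_i\neq 0$.

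Next I would feed $c_{ij}=\kappa w_iw_j-1$ into the identity $\sum_j c_{ij}w_j=0$ (which is just $G^TG\,w=0$, with $c_{ii}=1$), obtaining that every $w_i$ is a root of one and the same depressed cubic $\kappa t^3-(2+\kappa Q)t+S=0$, where $S=\sum_j w_j$ and $Q=\sum_j w_j^2$; in particular its three roots sum to zero. Since four numbers are drawn from at most three roots, a short case analysis over the possible multisets $\{w_i\}$ finishes the argument: the configurations with three equal and one distinct value, with two distinct values each doubled, and with one doubled value among three distinct roots are each shown to be infeasible---either they admit no real solution, or they force some $c_{ij}=\kappa w_iw_j-1<-1$, violating $c_{ij}=\cos\theta_{ij}\in[-1,1]$. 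The only surviving possibility is $w_1=w_2=w_3=w_4$; then $\sum_j\Gamma_j=w_1^{-1}\sum_j w_j\Gamma_j=0$, so $4+12\,c_{ij}=\|\sum_j\Gamma_j\|^2=0$ gives $c_{ij}\equiv-\tfrac13$, i.e. $\mathbf{\Gamma}\in\Omega_T$.

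The main obstacle I anticipate is this rank-$3$ analysis, and within it the cubic case distinction together with the $w_i=0$ subtlety. The proof that no asymmetric non-coplanar equilibrium exists rests entirely on combining the algebraic constraint ``every $w_i$ solves a common cubic whose roots sum to zero'' with the geometric admissibility constraint $c_{ij}\in[-1,1]$. Neither constraint alone rules out the unequal-$w_i$ configurations, so the delicate part is to verify each multiset and see that real feasibility and the cosine bound are \emph{jointly} contradictory unless all $w_i$ agree.
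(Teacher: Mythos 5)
Your proposal is correct, and it takes a genuinely different route from the paper's proof. The paper also splits into a degenerate and a generic case, but along a different line: it asks whether some triple satisfies $\Gamma_i^T\widehat{\Gamma}_j\Gamma_k=0$ (and then dots the equilibrium equation with $k_{ij}$ to drag the fourth attitude into the same plane, giving $\Omega_L\subseteq\Omega_C$), while in the generic case it eliminates $\Gamma_4$, cross-multiplies with $\Gamma_1$ to extract $f_{12}f_{34}=f_{13}f_{24}=f_{14}f_{23}$ plus one vector relation, dot-multiplies to build a polynomial system in the gains $f_{ij}$, and then \emph{asserts}, without displayed verification, that the unique positive real solution is $f_{ij}=2a/3$, i.e.\ $\Omega_T$. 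Your kernel-vector argument replaces that unproven uniqueness claim with checkable algebra, and I verified that your anticipated mechanisms do close every case: $c_{ij}\ge-1$ forces $\kappa w_iw_j>0$ for all pairs, hence $\kappa>0$ and (after a sign flip) all $w_i>0$, which kills every pattern drawing roots of mixed signs --- in particular the $(2,1,1)$ pattern, since the roots of the depressed cubic sum to zero; for the same-sign patterns, $(2,2)$ yields $\kappa(uv-u^2-v^2)=2$ with negative left side, and $(3,1)$ yields $\kappa u(v-2u)=2$ together with $\kappa uv(u+v)=3u+v$, which combine to $6u^2+uv+v^2=0$, infeasible for $u,v>0$ (equivalently, the first relation alone forces $c=\kappa uv-1=1+2\kappa u^2>1$). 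Two small patches are needed: rule out $\kappa=0$ explicitly (it forces $c_{ij}=-1$ for all pairs, impossible since $\Gamma_2=\Gamma_3=-\Gamma_1$ would give $c_{23}=1$), as otherwise the cubic degenerates and the roots-sum-to-zero step fails; and note that the same-sign cases are killed by infeasibility or by $c_{ij}>1$, not by the $c_{ij}<-1$ you predicted --- though your stated dichotomy ``no real solution or violation of $c_{ij}\in[-1,1]$'' remains accurate. A further merit of your version is that the rank split uniformly absorbs configurations with one degenerate triple but $\mathrm{rank}\,G=3$, which the paper must treat inside its coplanar case; what the paper's route buys is brevity, while yours buys a complete and verifiable uniqueness argument.
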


\begin{proof}
We consider two cases separately, according to whether there exist $\Gamma_i^T\widehat{\Gamma}_j\Gamma_k=0$ for some mutually unequal $i$,$j$,$k$.

If $\Gamma_i^T\widehat{\Gamma}_j\Gamma_k\neq0$, then any ${k}_{ij}$ and ${k}_{jk}$ are linearly independent. For any $\mathbf{\Gamma} \in \Omega$, we have $\Gamma_i \parallelsum \sum\limits_{j\in \mathcal{N}_i} f(\theta_{ij})\Gamma_j$, namely,
     \begin{gather}\label{eq:SF:proofequilibria}
       l_i\Gamma_i+\sum\limits_{j\in \mathcal{N}_i} f(\theta_{ij})\Gamma_j=0, \; i\in \mathcal{V},
     \end{gather}
     where $l_i \in \mathbb{R}$, $l_i\neq 0$.

    We denote $f_{ij}:=f(\theta_{ij})$, $i,j\in \mathcal{V}$ and $i\neq j$. By eliminating $\Gamma_4$ from (\ref{eq:SF:proofequilibria}), three independent equations can be obtained. When each one of them is cross-multiplied  with $\Gamma_1$ on the left side, we get
    \begin{equation}\label{eq:SF:proofequilibria2}
       \begin{gathered}
           (f_{12}f_{24}\!\!-\!\!l_2f_{14})\sin(\theta_{12}\!)k_{12}\!\!+\!\! (f_{13}f_{24}\!\!-\!\!f_{23}f_{14})\sin(\theta_{13}\!)k_{13} \!\!=\!\!0\\
            (f_{12}f_{34}\!\!-\!\!f_{23}f_{14})\sin(\theta_{12}\!)k_{12} \!\!+\!\! (f_{13}f_{34}\!\!-\!\!l_3f_{14})\sin(\theta_{13}\!)k_{13} \!\!=\!\!0\\
           (l_2f_{34}\!\!-\!\!f_{23}f_{24})\sin(\theta_{12}\!)k_{12} \!\!+\!\!(f_{23}f_{34}\!\!-\!\!l_3f_{24})\sin(\theta_{13}\!)k_{13}\!\!=\!\!0
       \end{gathered}
     \end{equation}
     Since $k_{12}$ and $k_{13}$ are independent, and $\sin(\theta_{ij})\neq 0$, we obtain
\begin{subequations}
     \begin{equation}\label{eq:SF:proofequilibria3a}
      f_{12}f_{34}=f_{13}f_{24},
    \end{equation}
    \begin{equation}\label{eq:SF:proofequilibria3b}
     f_{12}f_{34}=f_{14}f_{23},
    \end{equation}
    \begin{equation}\label{eq:SF:proofequilibria3c}
     f_{12}f_{13}\Gamma_1+f_{12}f_{23}\Gamma_2+f_{13}f_{23}\Gamma_3+f_{14}f_{23}\Gamma_4=0,
    \end{equation}
\end{subequations}
We dot-multiply equation (\ref{eq:SF:proofequilibria3c}) with $\Gamma_i$, $i \in \mathcal{V}$, respectively and notice that $\Gamma_i^T\Gamma_j=\frac{1}{a}f_{ij}-1$. By the above steps, four equations in terms of $\{f_{ij}\}_{i,j \in \mathcal{V}}$ can be derived from (\ref{eq:SF:proofequilibria3c}). Combining with (\ref{eq:SF:proofequilibria3a}) (\ref{eq:SF:proofequilibria3b}), we construct a polynomial system with respect to $f_{ij}$, $i,j \in \mathcal{V}$. We notice that these equations are symmetric with respect to the index. This polynomial system only has one positive real solution with $f_{ij}=f_{pq}=2a/3$, where $i \neq j, p \neq q$. Since $f(\cdot)$ is monotone, this solution just corresponds to regular tetrahedron formation $\Omega_T$.

If $\Gamma_i^T\widehat{\Gamma}_j\Gamma_k=0$ for $i,j,k \in \mathcal{V}$, then ${k}_{ij} \parallelsum {k}_{jk}$ and $\Gamma_i, \Gamma_j, \Gamma_k$ are coplanar. So we have ${k}_{ij}^T\Gamma_i=0$, ${k}_{ij}^T\Gamma_j=0$, ${k}_{ij}^T\Gamma_k=0$. Assume another reduced attitude is $\Gamma_m$, then by dot multiplying (\ref{eq:SF:proofequilibria}) with ${k}_{ij}$, we have ${k}_{ij}^T\Gamma_m=0$. So $\Gamma_m$ is also on the same plane, namely, the equilibria in this case are the subset of $\Omega_C$.

\end{proof}

Now, we are ready to state the following result.
\begin{theorem}
Under Graph \ref{As:SF:completegraph} and Assumption \ref{As:SFG:f}, a regular tetrahedron formation $\Omega_T $ is almost globally asymptotically stable.
\end{theorem}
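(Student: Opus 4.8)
The plan is to combine the four preceding results---global convergence to the equilibria set, its decomposition, the local exponential stability of $\Omega_T$, and the instability of the coplanar equilibria---into a single almost-global statement by means of a standard basin-of-attraction measure argument. First I would note that the gain $f(\theta)=a\cos(\theta)+a$ of Assumption~\ref{As:SFG:f} is nonnegative and bounded on $[0,\pi]$, so Theorem~\ref{Thm:SF:stableEquiSet} applies and every trajectory of the closed-loop system~(\ref{eq:PF:ClosedSystem}) converges to the equilibria set $\Omega$. By Lemma~\ref{Thm:SF:Equilibria} this set splits as $\Omega=\Omega_T\cup\Omega_L$ with $\Omega_L\subseteq\Omega_C$, so asymptotically the state approaches either the desired tetrahedron manifold $\Omega_T$ or the coplanar set $\Omega_L$. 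Theorem~\ref{Thm:SF:CMDetile} already gives that $\Omega_T$ is locally exponentially stable, and Lemma~\ref{Thm:SFG:CoplaneUnStable} gives that every equilibrium of $\Omega_L$ is unstable; hence it only remains to prove that the set of initial conditions attracted to $\Omega_L$ is Lebesgue-negligible in $(\mathcal{S}^2)^4$.

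The key step is to exploit the quantitative instability established inside the proof of Lemma~\ref{Thm:SFG:CoplaneUnStable}: at each $\mathbf{\Gamma}^*\in\Omega_L$ the linearization of~(\ref{eq:PF:ClosedSystem}) has at least one eigenvalue with strictly positive real part (off the cross configuration this is witnessed by the Rayleigh quotient $v^TA_\psi(\mathbf{\Gamma}^*)v>0$, and on the cross set $\Omega_{\overline{C}}$ by the isolated eigenvalue $+2a$). Consequently, by the center--stable manifold theorem, any trajectory converging to a fixed $\mathbf{\Gamma}^*\in\Omega_L$ must eventually lie on the local center--stable manifold through $\mathbf{\Gamma}^*$; since there is at least one unstable direction, this manifold has dimension strictly less than $8=\dim(\mathcal{S}^2)^4$, hence positive codimension and therefore measure zero.

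To upgrade this local statement to the whole basin of $\Omega_L$, I would use compactness of $\Omega_L$ to cover it by finitely many such center--stable neighborhoods and then invoke invariance of the flow: the global stable set of $\Omega_L$ is the union over rational times of the flow pre-images of this finite family of measure-zero local manifolds. Because the time-$t$ flow map is a diffeomorphism, each pre-image is again measure zero, so the whole basin of $\Omega_L$ is a countable union of null sets and is itself null. Combining this with the global convergence to $\Omega=\Omega_T\cup\Omega_L$ shows that almost every initial condition converges to $\Omega_T$, which together with the local exponential stability of $\Omega_T$ yields almost global asymptotic stability.

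The hard part will be making the measure-zero argument rigorous for the \emph{set} $\Omega_L$ rather than for isolated hyperbolic equilibria. Because $\Omega_L$ is positive-dimensional and its linearizations may carry center directions (the zero eigenvalues appearing in Lemma~\ref{Thm:SFG:CoplaneUnStable}), one cannot simply cite the stable manifold theorem at a single point; instead the center--stable reduction must be applied uniformly along $\Omega_L$, and one must verify that the flow pull-back of a null set stays null (which is where smoothness and invertibility of the flow are used). Provided these technical points are handled, the almost-global conclusion follows.
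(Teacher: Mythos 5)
Your proposal is correct and follows essentially the same route as the paper, whose entire proof is a one-sentence citation of Theorem~\ref{Thm:SF:stableEquiSet}, Lemma~\ref{Thm:SF:Equilibria} and Lemma~\ref{Thm:SFG:CoplaneUnStable}; the center--stable-manifold and flow-preimage measure-zero argument you spell out (including the caveat about the positive-dimensional, non-hyperbolic set $\Omega_L$) is precisely the standard reasoning the paper leaves implicit behind the words ``obviously obtained.'' If anything, your write-up is more rigorous than the published proof, not different from it.
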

\begin{proof}
By Theorem \ref{Thm:SF:stableEquiSet}, Lemma \ref{Thm:SF:Equilibria} and Lemma \ref{Thm:SFG:CoplaneUnStable}, the assertion can be obviously obtained.
\end{proof}

\section{Rotating  regular Tetrahedron Formation}

In this section, we will construct a regular tetrahedron formation which spins about an axis passing through one of the agents at a constant speed. To this aim, we will redesign the inter-agent connection as a weighed graph. First we will equip a graph  $\mathcal{G}$ with weight.

Here, we define a weighed graph $\mathcal{\widetilde{G}}$ as a pair $(\mathcal{G}, \mathcal{W}(e))$, where $\mathcal{G}=(\mathcal{V},\mathcal{E})$ is a graph, and  $\mathcal{W}:\mathcal{E} \rightarrow \mathbb{R}$ is a weight function assigned to each edge $e \in \mathcal{E}$. In addition, for the nodes in $\mathcal{V}=\{1,\ldots,n\}$ , we introduce the following two operators. The node $i$'s \textit{successor skipping} $k$ is denoted by $Suc^k(i)$, which is defined as
$$Suc^k(i) =\begin{cases}
      (i+1)\, mod\, n, & if\; i+1 \neq k\\
      (i+2)\, mod\, n, & if\; i+1 = k
    \end{cases},
$$
where $mod$ is the modulo operator. And $Pre^k(i)$ is the node except $k$ whose successor skipping $k$ is $i$, and it is referred to as node $i$'s \textit{predecessor skipping} $k$.

Now we redesign the inter-agent connection as a weighed graph to obtain a rotating regular tetrahedron.
\begin{graph}\label{As:RF:weighedgraph}
  The inter-agent weighed graph $\mathcal{\widetilde{G}}=(\mathcal{G}, \mathcal{W}(e))$, where $\mathcal{G}$ is a completed graph with 4 nodes, and $\mathcal{W}\left((i,j)\right)$ satisfying
$$\mathcal{W}\left((i,j)\right) =\begin{cases}
      0.5, & if\;  i \neq j, k \in \{i,j\}\\
      p,   & if \; k \notin \{i,j\}, j=Suc^k(i)\\
      1-p, & if \; k \notin \{i,j\}, j=Pre^k(i)
    \end{cases},
$$
where $(i,j) \in \mathcal{E}$, $k \in \mathcal{V}$ determines the rotation axis, and $p \in \{0,1\}$ specifies the direction of rotation. For simplicity, when there is no ambiguity, we use $\mathcal{W}\left(i,j\right)$ instead of $\mathcal{W}\left((i,j)\right)$.
\end{graph}
Two examples of Graph \ref{As:RF:weighedgraph} with different $k$ and $p$ are depicted in Fig. \ref{Fig:RF:WGraph}.
\begin{figure}
  \centering
  \includegraphics[width=0.45\textwidth]{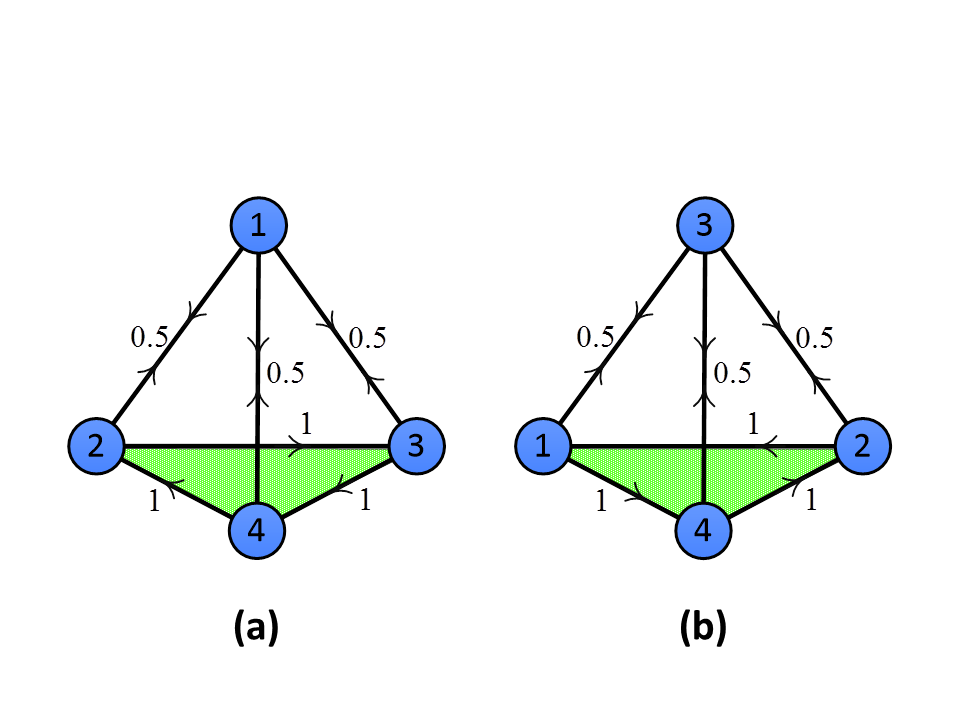}\\
  \caption{Depiction of inter-agent connection for different $k$ and $p$.  The graph in (a) is in the case $k=1$, $p=1$. And that in (b) $k=3$, $p=0$.}\label{Fig:RF:WGraph}
\end{figure}

\begin{remark}
  Graph \ref{As:RF:weighedgraph} is a connection relationship where there is one agent with bidirectional access to any other agents, and the rest of agents construct a directed cycle graph.
\end{remark}
By reordering the agents, without loss of generality, we can assume $k=1$ and $p=1$ throughout the rest of this section. Meanwhile the control law need a slight improvement to include the weight of Graph \ref{As:RF:weighedgraph}.

\begin{equation}
    \widetilde{\omega}_i=-\sum_{j\in \mathcal{N}_i} \mathcal{W}(i,j)f(\theta_{ij})\widehat{\Gamma}_i\Gamma_j, \;\; \forall i\in \mathcal{V}, \label{eq:RF:Control}
\end{equation}
and the closed-loop system becomes
\begin{equation}
   \dot{\Gamma}_i=\widehat{\Gamma}_i \sum_{j\in \mathcal{N}_i} \mathcal{W}(i,j)f(\theta_{ij})\widehat{\Gamma}_i\Gamma_j, \;\; \forall i\in \mathcal{V}. \label{eq:RF:ClosedSystem}
\end{equation}

Under the coordinates transformation of (\ref{eq:SF:newCoorTrans}), the closed-loop system (\ref{eq:RF:ClosedSystem}) can be transformed into a triangle system
    \begin{subequations}
        \begin{equation}\label{eq:RF:newCorDynf1}
          \dot{\xi}_s= \widetilde{f}_s(\xi_s),
        \end{equation}
        \begin{equation}\label{eq:RF:newCorDynf2}
            \dot{\xi}_c= \widetilde{f}_c(\xi_c,\xi_s),
        \end{equation}
    \end{subequations}
where the explicit expression of $\widetilde{f}_s(\cdot)$ can be obtained, which shares a similar form with (\ref{eq:SF:newCorDynf1Detial}). For simplicity, we omit the explicit expression here.

\begin{theorem}\label{thm:RF:LocalStable}
  Under Graph \ref{As:RF:weighedgraph} and Assumption \ref{As:SF:f}, in closed-loop system (\ref{eq:RF:ClosedSystem}), the regular tetrahedron formations $\Omega_T$ is an invariant manifold, and it is also exponentially stable.
\end{theorem}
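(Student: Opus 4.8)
The plan is to reduce both assertions to the six--dimensional subsystem (\ref{eq:RF:newCorDynf1}) by mirroring the argument of Theorem~\ref{Thm:SF:CMDetile}. First note that the weighted closed loop (\ref{eq:RF:ClosedSystem}) is still rotationally invariant: the weights are scalar coefficients, so the Rodrigues--formula computation behind Lemma~\ref{Thm:PF:rotationalInvariant} goes through unchanged. Consequently the transformation (\ref{eq:SF:newCoorTrans}) again produces the triangular form (\ref{eq:RF:newCorDynf1})--(\ref{eq:RF:newCorDynf2}), in which $\dot{\xi}_s$ depends on $\xi_s$ alone. Since $\Omega_T=\{\xi_s=\xi_s^*\}$ with $\xi_s^*=[-\tfrac13,\dots,-\tfrac13]^T$ by (\ref{eq:SF:OmegaTNewCoor})--(\ref{eq:SF:equiNewCoor}), invariance of $\Omega_T$ is equivalent to $\widetilde{f}_s(\xi_s^*)=0$, and exponential stability of the manifold is equivalent to exponential stability of the equilibrium $\xi_s^*$ of (\ref{eq:RF:newCorDynf1}); the tangential $\xi_c$--dynamics decouples by the cascade structure and merely carries the rigid rotation, so it does not affect transverse stability.

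For invariance I would evaluate $\tfrac{d}{dt}\cos\theta_{ij}=\dot{\Gamma}_i^T\Gamma_j+\Gamma_i^T\dot{\Gamma}_j$ directly at a tetrahedron configuration. Using $\widehat{\Gamma}_i^T\widehat{\Gamma}_i=I-\Gamma_i\Gamma_i^T$ and $\Gamma_a^T\Gamma_b=c$ with $c:=-\tfrac13$ for $a\neq b$, each pairwise derivative collapses to $-f(\theta^*)\big[(\mathcal{W}(i,j)+\mathcal{W}(j,i))(1-c^2)+c(1-c)\sum_{k\neq i,j}(\mathcal{W}(i,k)+\mathcal{W}(j,k))\big]$. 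The weights of Graph~\ref{As:RF:weighedgraph} satisfy the two identities $\mathcal{W}(i,j)+\mathcal{W}(j,i)=1$ for every pair and $\sum_{k\neq i}\mathcal{W}(i,k)=\tfrac32$ for every node, so the inner sum equals $3-(\mathcal{W}(i,j)+\mathcal{W}(j,i))=2$. With $1-c^2=\tfrac89$ and $c(1-c)=-\tfrac49$ this gives $\tfrac89-\tfrac49\cdot2=0$, hence $\widetilde{f}_s(\xi_s^*)=0$ and $\Omega_T$ is invariant.

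For stability I would linearize (\ref{eq:RF:newCorDynf1}) at $\xi_s^*$ along the same lines as (\ref{eq:SF:proofCM}), now with the weighted counterpart of the matrix $h$. The structural point is that every summand of $\widetilde{f}_s$ carries a single weight factor, so the resulting $6\times6$ matrix $A^w$ is linear in the weight array. Splitting $\mathcal{W}=\mathcal{W}^{s}+\mathcal{W}^{a}$ into its symmetric and antisymmetric parts, where $\mathcal{W}^{s}(i,j)\equiv\tfrac12$ is exactly one half of the unweighted complete graph, yields $A^w=\tfrac12 A_{\mathrm{stat}}+A^{a}$. Here $A_{\mathrm{stat}}$ is the symmetric stationary transverse linearization whose spectrum was computed in the proof of Theorem~\ref{Thm:SF:CMDetile} and is strictly negative under Assumption~\ref{As:SF:f}. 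If one shows that $A^a$ is skew--symmetric, then $\tfrac12(A^w+{A^w}^T)=\tfrac12 A_{\mathrm{stat}}$ is negative definite, so the numerical range of $A^w$ lies in the open left half--plane and every eigenvalue of $A^w$ has strictly negative real part; by the cascade structure $\xi_s^*$ is then exponentially stable, and therefore so is $\Omega_T$.

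The main obstacle is precisely this last verification that the antisymmetric part of the weights produces a skew--symmetric contribution $A^a$, i.e.\ that only the symmetric half of $\mathcal{W}$ survives in the symmetric part of $A^w$. The diagonal already confirms the expected $\tfrac12$--scaling, since a short computation gives the diagonal entry $\dot{F}(c)(c^2-1)+4cF(c)$, which depends on the weights solely through $\mathcal{W}(i,j)+\mathcal{W}(j,i)=1$ and is exactly half of its stationary value; the work lies in carrying the analogous off--diagonal bookkeeping. Should $A^a$ fail to be exactly skew, the fallback is to check directly that the fixed matrix $\tfrac12(A^w+{A^w}^T)$ is negative definite, which again suffices for negativity of all real parts and hence for exponential stability of $\Omega_T$.
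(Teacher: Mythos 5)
Your reduction to the $\xi_s$--subsystem and your invariance argument are sound and are essentially the paper's route: the paper also proves invariance by verifying $\widetilde{f}_s(\xi_s^*)=0$ and then linearizes (\ref{eq:RF:newCorDynf1}) at $\xi_s^*$. Your verification via the weight identities $\mathcal{W}(i,j)+\mathcal{W}(j,i)=1$ and $\sum_{k\neq i}\mathcal{W}(i,k)=\tfrac32$, with the cancellation $\tfrac89-2\cdot\tfrac49=0$, is correct and in fact more explicit than the paper's one-line assertion; the $\tfrac12$--scaling of the diagonal of the linearization also checks out.

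The stability half, however, has a genuine gap, and both of your proposed ways of closing it fail. The decomposition $A^w=\tfrac12A_{\mathrm{stat}}+A^a$ is legitimate (every entry of the linearization is linear in the weights), but $A^a$ is \emph{not} skew--symmetric. With $c=-\tfrac13$, $\alpha:=c(c-1)\dot F(c)+cF(c)=\tfrac49\dot F(c)-\tfrac13F(c)$ and $\beta:=F(c)$, the entry coupling edge $\{i,j\}$ to edge $\{i,m\}$ at $\xi_s^*$ is $\mathcal{W}(i,m)\alpha-\mathcal{W}(j,m)\beta$. For $k=1$, $p=1$ the antisymmetric weights contribute $\tfrac{\alpha}{2}$ to $\partial\dot\xi_{12}/\partial\xi_{23}$ (via $\mathcal{W}^a(2,3)=\tfrac12$) but $0$ to $\partial\dot\xi_{23}/\partial\xi_{12}$, since there both relevant weights touch node $1$, where $\mathcal{W}^a$ vanishes; skewness of $A^a$ would therefore force $\alpha=0$, which fails for generic admissible gains (e.g.\ $f=\cos\theta+1$ gives $\alpha=\tfrac29$). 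The fallback fails as well: one finds $\tfrac12\bigl(A^w+{A^w}^T\bigr)=\tfrac12A_{\mathrm{stat}}+\tfrac{\alpha}{4}B$, where $B$ couples the three node--$1$ edges to the three cycle edges and has eigenvalues $\{0,0,\pm\sqrt3,\pm\sqrt3\}$, and the spectrum of the symmetric part comes out in closed form as $\{-4F(c),\;d,\;d-s\pm\sqrt{s^2+\tfrac{3}{16}\alpha^2}\ \text{(each double)}\}$ with $d=-\tfrac43F(c)-\tfrac89\dot F(c)$, $s=-\tfrac23F(c)+\tfrac29\dot F(c)$. Taking $f(\theta)=1+\varepsilon\cos\theta$, which satisfies Assumption \ref{As:SF:f} for small $\varepsilon>0$, gives $F(c)\to1$, $\dot F(c)=\varepsilon\to0$, hence $d-s+\sqrt{s^2+\tfrac{3}{16}\alpha^2}\to-\tfrac23+\tfrac{\sqrt{67}}{12}\approx+0.015>0$: the symmetric part is indefinite for admissible gains, the numerical range of $A^w$ enters the right half--plane, and no argument through $\tfrac12\bigl(A^w+{A^w}^T\bigr)$ can succeed.

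The conclusion of the theorem is not endangered --- $A^w$ is Hurwitz --- but this is a property of the actual spectrum of a non-normal matrix and must be computed rather than inferred from definiteness. A workable route: split $\mathbb{R}^6$ into the $\pm1$ eigenspaces of the exchange matrix $J$; the weighted perturbation then acts through $\widehat{u}$ with $u=[1,1,1]^T$, and $A^w$ reduces to the eigenvalues $-4F(c)$, $d$, plus those of two complex $2\times2$ blocks, which lie in the open left half--plane for all $F(c),\dot F(c)>0$. This is, in effect, the direct computation the paper compresses into ``by applying the same approach as in the proof of Theorem \ref{Thm:SF:CMDetile} \ldots\ the system matrix is Hurwitz''; the structural shortcut you propose in its place is exactly the step that is false, so the exponential-stability half of your proof does not go through as written.
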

\begin{proof}
We can verify that $\widetilde{f}_s(\xi^*_s)=0$, where $\xi^*_s$ satisfies (\ref{eq:SF:equiNewCoor}). So $\Omega_T=\left\{(\xi_s^T,\xi_c^T)^T:\xi_s= \xi_s^* \right\}$ is an invariant manifold of system (\ref{eq:RF:newCorDynf1})-(\ref{eq:RF:newCorDynf2}). By applying the same approach in the proof of Theorem \ref{Thm:SF:CMDetile}, we can linearize the closed-loop system (\ref{eq:RF:newCorDynf1}) around $\xi^*_s$, and the system matrix of this linearized system is Hurwitz. Thus, $\Omega_T$ is exponentially stable.
\end{proof}

Theorem \ref{thm:RF:LocalStable} gives local stability of regular tetrahedron formations $\Omega_T$, but in order to reveal the motions of the whole formation already formed, we need to investigate the dynamics of subsystem (\ref{eq:RF:newCorDynf2}) on the manifold $\Omega_T$.

First, for every $\mathbf{\Gamma}\in (\mathcal{S}^2)^4$, we define a normalized attitude $\widetilde{\mathbf{\Gamma}} \in (\mathcal{S}^2)^4$, such that $\widetilde{\mathbf{\Gamma}}$ and $\mathbf{\Gamma}$ has the same formation, i.e. $\widetilde{\Gamma}_i^T\widetilde{\Gamma}_j=\Gamma_i^T\Gamma_j$, for all $i,j \in \mathcal{V}$, and $\widetilde{\Gamma}_1$ is overlapping with the north pole, and $\widetilde{\Gamma}_2$ locates on the plane $\emph{XOZ}$. From the rotational process introduced by the transformation (\ref{eq:SF:newCoorTrans}), we have $\Gamma_i= R_3(\psi_1)R_2(\phi_1)R_1(\gamma)\widetilde{\Gamma}_i$, where $i\in \mathcal{V}$ and $\left\{R_j(\cdot)\right\}_{j=1,2,3}$ is defined in (\ref{eq:SF:rotMatrix}). Unless necessary, we omit the explicit dependence of $R_j$ on its argument. Then we take the derivative of this relationship with respect to time $t$,
\begin{equation}\label{eq:RF:SpeedDerive1}
\begin{split}
  \dot{\Gamma}_i=R_3R_2R_1 & \sum\limits_{l=1}^6 {\frac{\partial \widetilde{\Gamma}_i}{\partial \xi_l} \dot{\xi}_l}  +R_3R_2\frac{\partial R_1}{\partial \gamma}\dot{\gamma}\\
  &\;+R_3\frac{\partial R_1}{\partial \phi_1}R_1\dot{\phi_1}+\frac{\partial R_3}{\partial \psi_1}R_2R_1\dot{\psi_1}.
\end{split}
\end{equation}
On the other hand, from the dynamics (\ref{eq:RF:ClosedSystem}), and the fact $\widehat{\Gamma}_i\widehat{\Gamma}_i\Gamma_j=\cos(\theta_{ij})\Gamma_i-\Gamma_j$, we have
\begin{equation}\label{eq:RF:SpeedDerive2}
   \dot{\Gamma}_i= R_3R_2R_1\! \sum_{j\in \mathcal{N}_i} \mathcal{W}(i,j)f(\theta_{ij})\left[ \cos(\theta_{ij})\widetilde{\Gamma}_i\!-\!\widetilde{\Gamma}_j \right]
\end{equation}

As we consider the motion of system restricted in the invariant manifold $\Omega_T$, we have $\theta_{ij}=\theta^*=\arccos(-\frac{1}{3})$, $i\neq j \in \mathcal{V}$, and $\dot{\xi}_l=0$, for $l \in \{1,2,\cdots,6\}$, so that $\widetilde{\Gamma}_1=[0, 0 ,1]^T$, $\widetilde{\Gamma}_2=[-\frac{2\sqrt{2}}{3} ,0 ,-\frac{1}{3}]^T$, $\widetilde{\Gamma}_3=[-\frac{\sqrt{2}}{3} ,\frac{\sqrt{6}}{3} , -\frac{1}{3}]^T$, $\widetilde{\Gamma}_4=[-\frac{\sqrt{2}}{3} ,-\frac{\sqrt{6}}{3} ,-\frac{1}{3}]^T$. By plugging (\ref{eq:RF:SpeedDerive2}) into (\ref{eq:RF:SpeedDerive1}), we get twelve scalar equations, but only three of them are independent, which are
\begin{equation*}
\begin{split}
\dot{\psi}_1\cos(\phi_1)\sin(\gamma)-\dot{\phi}_1\cos(\gamma)=0,\\
\dot{\phi}_1\sin(\gamma)+\dot{\psi}_1\cos(\gamma)\cos(\phi_1)=0,\\
\sqrt{3}f(\theta^*)+2\dot{\gamma}+2\dot{\psi}_1\sin(\phi_1)=0.
\end{split}
\end{equation*}
By solving these equations, we have $\dot{\phi}_1=0$, $\dot{\psi}_1=0$, and $\dot{\gamma}=-\frac{\sqrt{3}}{2}f(\theta^*)$.  This result can be interpreted as follows.

\begin{proposition}
In the regular tetrahedron formation constructed in Theorem \ref{thm:RF:LocalStable}, the reduce attitude $\Gamma_1$ remains stationary, and the whole formation spins about a axis passing through $\Gamma_1$ with a constant angular velocity $\omega=\frac{\sqrt{3}}{2}f(\theta^*)$.
\end{proposition}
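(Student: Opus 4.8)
The plan is to read the physical motion of the four reduced attitudes directly off the scalar identities already obtained on the invariant manifold $\Omega_T$, so the bulk of the work is interpretation rather than fresh computation. By Theorem \ref{thm:RF:LocalStable} a trajectory started in $\Omega_T$ stays there; hence along such a trajectory the relative component is frozen, $\xi_s\equiv\xi_s^*$ with $\xi_s^*$ as in (\ref{eq:SF:equiNewCoor}), and the whole evolution is carried by the absolute coordinates $\xi_c=[\phi_1,\psi_1,\gamma]^T$ governed by (\ref{eq:RF:newCorDynf2}). Matching the kinematic expression (\ref{eq:RF:SpeedDerive1}) against the dynamic one (\ref{eq:RF:SpeedDerive2}) with $\dot{\xi}_l=0$ for $l=1,\dots,6$ is exactly the computation carried out above, which collapses the twelve component equations to the three independent ones and returns $\dot{\phi}_1=0$, $\dot{\psi}_1=0$ and $\dot{\gamma}=-\frac{\sqrt{3}}{2}f(\theta^*)$. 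I would take these three facts as the starting point of the proof.

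First I would settle the stationarity of $\Gamma_1$. Since $\widetilde{\Gamma}_1=[0,0,1]^T$ lies on the $Z$-axis it is fixed by $R_1(\gamma)$, so $\Gamma_1=R_3(\psi_1)R_2(\phi_1)R_1(\gamma)\widetilde{\Gamma}_1=R_3(\psi_1)R_2(\phi_1)[0,0,1]^T$, which is precisely the RPY expression (\ref{eq:pre:RPY}) and is independent of $\gamma$. Thus $\Gamma_1$ is a function of $(\psi_1,\phi_1)$ alone, and $\dot{\psi}_1=\dot{\phi}_1=0$ forces $\dot{\Gamma}_1=0$, i.e. $\Gamma_1$ remains stationary.

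The spin of the formation is the only remaining point. With $\psi_1,\phi_1$ constant, set $Q:=R_3(\psi_1)R_2(\phi_1)$, a fixed rotation, so that $\Gamma_i(t)=QR_1(\gamma(t))\widetilde{\Gamma}_i$ for every $i\in\mathcal{V}$ with the $\widetilde{\Gamma}_i$ frozen at their $\Omega_T$-values. Eliminating $\widetilde{\Gamma}_i$ via the initial configuration yields $\Gamma_i(t)=QR_1(\gamma(t)-\gamma(0))Q^{-1}\Gamma_i(0)$, so the entire configuration is obtained from its initial value by the single rotation $QR_1(\Delta\gamma)Q^{-1}$. Because $R_1(\cdot)$ in (\ref{eq:SF:rotMatrix}) rotates about the $Z$-axis $e_3$, this conjugate rotates about $Qe_3$, and a direct evaluation gives $Qe_3=R_3(\psi_1)R_2(\phi_1)[0,0,1]^T=\Gamma_1$. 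Hence all four attitudes rigidly rotate about the axis through the stationary point $\Gamma_1$ through the angle $\gamma(t)-\gamma(0)$; since $\dot{\gamma}=-\frac{\sqrt{3}}{2}f(\theta^*)$ is constant and $f(\theta^*)>0$ by Assumption \ref{As:SF:f}, the motion is a steady spin at angular velocity $\omega=\frac{\sqrt{3}}{2}f(\theta^*)$, the sign merely fixing the sense of rotation.

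The geometric bookkeeping is routine; the step I expect to need the most care is the identification of the $\gamma$-rotation axis with $\Gamma_1$, together with the degenerate configuration $\phi_1=\pm\pi/2$ in which $\Gamma_1$ sits at a pole and the coordinates $\psi_1$ and $\gamma$ cease to be independent. In that case the first two scalar equations no longer pin down $\dot{\psi}_1$, so I would instead argue on the combined angle $\gamma+\psi_1\sin\phi_1$ appearing in the third equation, whose rate is still $-\frac{\sqrt{3}}{2}f(\theta^*)$; since $\gamma$ and $\psi_1$ then generate the same polar rotation, the conclusion that the formation spins about $\Gamma_1$ at the stated rate persists.
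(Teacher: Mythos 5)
Your proposal is correct and takes essentially the same route as the paper: both rest on matching the kinematic identity (\ref{eq:RF:SpeedDerive1}) with the closed-loop expression (\ref{eq:RF:SpeedDerive2}) on the invariant manifold $\Omega_T$ to obtain $\dot{\phi}_1=\dot{\psi}_1=0$ and $\dot{\gamma}=-\frac{\sqrt{3}}{2}f(\theta^*)$, after which the paper simply states the proposition as the interpretation of these three equations. Your extra steps --- the conjugation argument showing $QR_1(\Delta\gamma)Q^{-1}$ is a rotation about $Qe_3=R_3(\psi_1)R_2(\phi_1)[0,0,1]^T=\Gamma_1$, and the handling of the coordinate singularity $\phi_1=\pm\pi/2$ via the combined angle --- correctly make rigorous what the paper leaves implicit, without changing the approach.
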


\section{Simulation}\label{sec:simu}
In this section, we present several numerical examples to show convergence of the desired formation governed by the proposed control.

First, we start with the stationary tetrahedron formation, in which the inter-agent graph is complete.
Two specific gain functions satisfying Assumption \ref{As:SF:f} are simulated successively, which are $f_1(\theta)=\cos(\theta)+1$ and $f_2(\theta)=e^{-\theta}$. The simulation result is presented in Fig.~\ref{Fig:Sim:SF}. From (c) and (d) in Fig.~\ref{Fig:Sim:SF}, the geodesic distance between every two reduced attitudes converges to $\arccos(-\frac{1}{3})$ eventually, namely, the formation of four reduced attitudes converge to a stationary regular tetrahedron in $\mathcal{S}^2$.

\begin{figure}[t]
    \centering
    \subfigure[Movement trajectory of reduced attitudes system with $f_1$.]{%
        \includegraphics[width=0.21\textwidth]{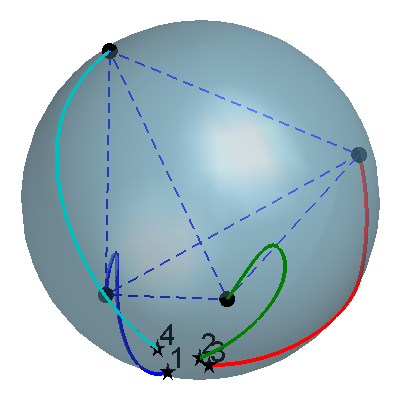}
        \label{Fig:Sim:SF_Cos}} \,
    \subfigure[Movement trajectory of reduced attitudes system with $f_2$.]{%
        \includegraphics[width=0.21\textwidth]{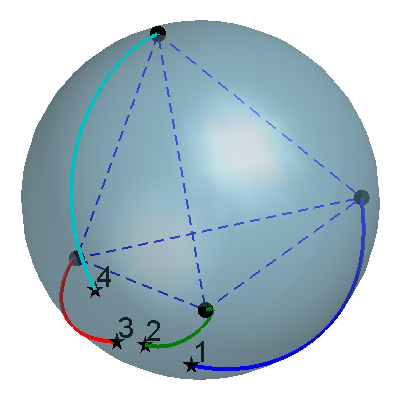}
        \label{Fig:Sim:SF_Exp}}
    \subfigure[Cosines of geodesic distance between every two reduced attitudes with $f_1$.]{%
        \includegraphics[width=0.45\textwidth]{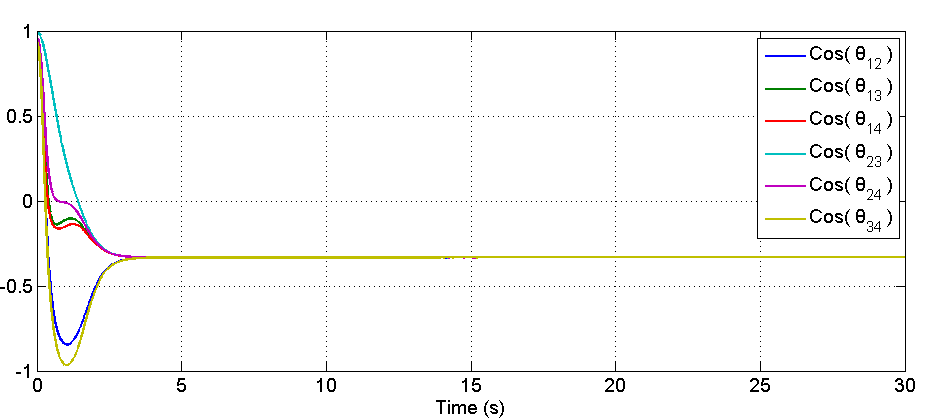}
        \label{Fig:Sim:SF_Cos1}}
    \subfigure[Cosines of geodesic distance between every two reduced attitudes with $f_2$.]{%
        \includegraphics[width=0.45\textwidth]{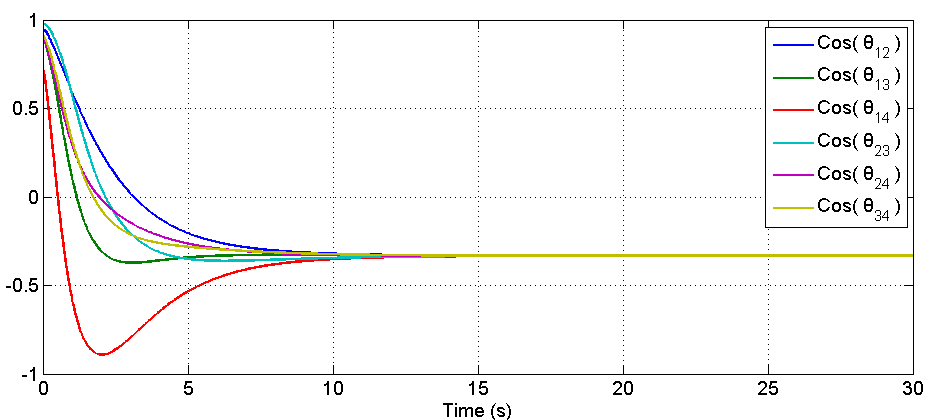}
        \label{Fig:Sim:SF_Exp1}}
    \caption{Simulation result of two reduced attitude systems with different gain functions $f_1(\theta)=e^{-\theta}$ and $f_2(\theta)=\cos(\theta)+1$. }
    \label{Fig:Sim:SF}
\end{figure}

Then, we turn the inter-agent graph to the weighed Graph \ref{As:RF:weighedgraph} with $k=1$, $p=1$. Two different gain functions are applied, which are $f_1(\theta)=\cos(\theta)+1$ and $f_2(\theta)=e^{-\theta}$. In both cases, the reduced attitudes construct a rotating tetrahedron formation, as presented in Fig.~\ref{Fig:Sim:RF}. From (c) and (d) in Fig.~\ref{Fig:Sim:RF} , we can see that in the formation formed, $\Gamma_1$ is stationary and the other three attitudes spin with a constant angular velocity. The period of this rotation is exactly $2\sqrt{3}\pi$.

\begin{figure}[t]
    \centering
    \subfigure[Movement trajectory of reduced attitudes system with $f_1$.]{%
        \includegraphics[width=0.21\textwidth]{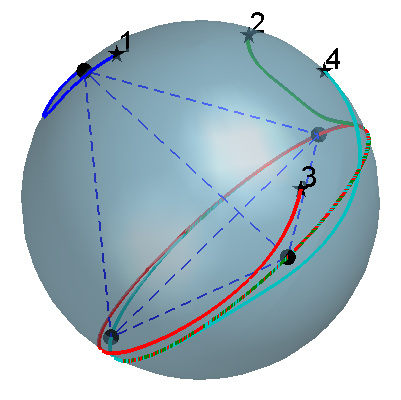}
        \label{Fig:Sim:RF_Cos}}
    \,
    \subfigure[Movement trajectory of reduced attitudes system with $f_2$.]{%
        \includegraphics[width=0.21\textwidth]{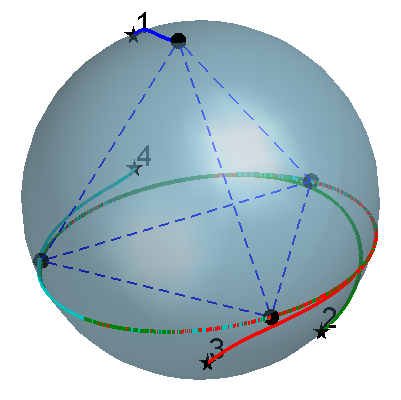}
        \label{Fig:Sim:RF_Exp}}
    \subfigure[Three components of every reduced attitude vector with $f_1$.]{%
        \includegraphics[width=0.45\textwidth]{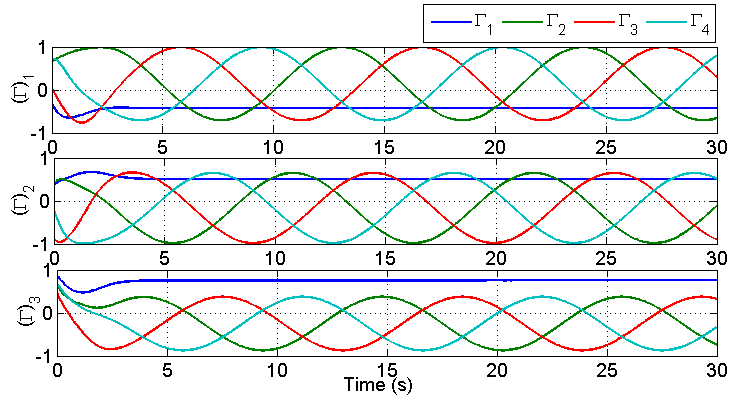}
        \label{Fig:Sim:RF_Cos1}}
    \caption{Simulation result of two reduced attitude systems with different gain functions $f_1(\theta)=e^{-\theta}$ and $f_2(\theta)=\cos(\theta)+1$. }
    \label{Fig:Sim:RF}
\end{figure}

\section{Concluding Remark}\label{sec:conclusion}
In this paper, we presented an intrinsic formation control method for reduced attitudes, by which a specific solid formation, regular tetrahedron, was rendered almost globally stable, and a rotating formation become locally exponentially stable. The proposed control did not contain any information of the desired formation beforehand, and only depended on the relative reduced attitudes between the agents. In future work, various formation patterns will be investigated, by which the intrinsic formation results on attitude space will be gradually extended.


\bibliographystyle{plain}      
\bibliography{ref}           



\appendix
\section{Proofs}
\subsection{Proof of \textit{Lemma} \ref{Thm:PR:lambdaMatrix}}    

We note that $\Lambda_n=E_n-J$, where $E_n$ is the matrix whose elements are all $1$'s. Since $E_n$ is both symmetric and persymmetric, \emph{Lemma} \ref{Thm:PR:AJCommutative} gives that $E_n$ and $J$ are commutative. By \emph{Lemma} \ref{Thm:pre:simuDiagonalize}, $E_n$ and $J$ can be simultaneously diagonalized. Thus, the eigenvalues of $\Lambda_n$ is a combination of those of $E_n$ and $J$, namely
$$\lambda_i(\Lambda_n)=\lambda_{p(i)}(E_n)+\lambda_{q(i)}(J),$$
where $\lambda_i(\cdot)$ is the $i$-th eigenvalue and $\left\{p(i)\right\}_{i=1}^6$, $\left\{q(i)\right\}_{i=1}^6$ are two specific permutations of $\{1,2,\cdots,6\}$. Meanwhile, when we add all the other rows in $det(\lambda I-E_n)$ to the last row and eliminate the rows except the last one, we have $det(\lambda I-E_n)=(\lambda-n)\lambda^{n-1}$. Hence $\lambda_{i}(E_n)=\{n,0,\cdots,0\}$. The same operation will give us $\lambda_{i}(J)=\{(-1)^{i}\}_{i=1}^6$, and $det(\lambda I-\Lambda_n)$ must contain a factor $(\lambda-n+1)$. Then the assertion follows.

\subsection{Proof of \textit{Lemma} \ref{Thm:SFG:Inequality}}         

We rewrite the problem as the following optimization problem
\begin{equation*}
\begin{split}
  \min_{X} &\;G(X)=\sum\limits_{i \in \mathcal{V}} [\cos^2(x_i)+\cos(x_i)]\\
  s.t.\;& A_IX \ge b_I\\
  &A_EX=2\pi
\end{split}
\end{equation*}
where $X=[x_1,x_2,x_3,x_4]^T$, $A_E=[1,1,1,1]^T$, $b_I=[-\pi,-\pi,-\pi,-\pi,0,0,0,0]^T$, $A_I=[-I_4,I_4]^T$ and we denote $I_4 \in \mathbb{R}^{4 \times 4}$ the identity matrix. Then by KKT condition, introducing Lagrange-multiplier $\mathbf{\lambda}=[\lambda_I^T,\lambda_E^T ]^T$ where$ \lambda_I \in \mathbb{R}^8, \lambda_E \in \mathbb{R} $, we have the necessary condition for the above optimization problem as
\begin{equation*}
  (KKT)\begin{cases}
  \nabla G(X^*)=[A_I^T\; A_E^T]\mathbf{\lambda}^*\\
  \lambda_I^*(A_IX-b_I) =0\\
  A_EX^*=2\pi\\
  \lambda_I^* \ge 0\\
  A_IX^* \ge b_I.
\end{cases}
\end{equation*}
By solving this condition, we can get three groups of solutions. If we ignore the order among $x_i$, these three solutions are $(0,2/3\pi,2/3\pi,2/3\pi)$, $(0,0,\pi,\pi)$, $(\pi/2,\pi/2,\pi/2,\pi/2)$. The minimal objective value comes from $X^*=(\pi/2,\pi/2,\pi/2,\pi/2)^T$, and $G(X^*)=0$.

\end{document}